\definecolor{darkblue}{rgb}{0.2,0.2,0.6}
\definecolor{darkblue2}{rgb}{0.2,0.2,0.9}
\definecolor{superdarkblue}{rgb}{0.2,0.2,0.3}
\definecolor{citegreen}{rgb}{0.2,0.2,0.6}
\newcommand\distance{$\dist(\xx, E) := \inf_{\yy\in E}|\xx-\yy|_{\dR^d}$ 
	is the distance between a point $\xx\in\dR^d$ and 
	a set $E\subset\dR^d$.}
\newcommand\pot{\left(\frac{k}{s} \right )^{\frac{2}{\aa}}}
\newcommand\df{\dot{f}}
\newcommand\ddf{\ddot{f}}
\newcommand\dddf{\dddot{f}}
\newcommand\ddddf{f^{(4)}}
\newcommand\limphI{ k^{-\frac{1}{\aa}}}
\newcommand\limphII{\frac{ k^{-\frac{1}{\aa}}}{\aa}}
\newcommand\limphIII{-\frac{\aa-1}{\aa^2}k^{-\frac{1}{\aa}}}
\newcommand\limphIV{\frac{(\aa-1)(2\aa-1)}{\aa^3} k^{-\frac{1}{\aa}}}
\newcommand\cNE{\cN_{\lme - E}}
\newcommand\dph{\dot{\phi}}
\newcommand\ddph{\ddot{\phi}}
\newcommand\dddph{\dddot{\phi}}
\newcommand\lims{\lim_{s\arr \infty}}
\renewcommand\gg{\gamma}
\newcommand\frh{\mathfrak{h}}
\newcommand\fri{\mathfrak{i}}
\newcommand\tm{\times}
\newcommand\nb{\nabla}
\newcommand{\xx}{{\boldsymbol{x}}}
\newcommand{\yy}{{\boldsymbol{y}}}
\newcommand{\dist}{\mathop{\mathrm{dist}}\nolimits}
\renewcommand{\aa}{\alpha}
\theoremstyle{definition}
\newcommand{\vast}{\bBigg@{3}}
\newcommand{\Vast}{\bBigg@{5}}
\newcommand\cdR{\ov{\dR}}
\newcommand\cyl{\mathrm{cyl}}
\newcommand\lme{{E_\infty}}
\newcommand{\eg}{{\it e.g.}\,}
\newcommand{\ie}{{\it i.e.}\,}
\newcommand{\cf}{{\it cf.}\,}
\renewcommand\and{\qquad\text{and}\qquad}
\newcommand\sm{\setminus}
\newcommand\Op{\sfH}
\newcommand\dl{\delta}
\newcommand\Dl{\Delta}
\newcommand\frf{\mathfrak{f}}
\newcommand{\comm}[1]{}
\newcommand\G{\Gamma}
\renewcommand\aa{\alpha}
\newcommand\lm{\lambda}
\newcommand\s{\sigma}
\newcommand\p{\partial}
\newcommand\Omg{\Omega}
\renewcommand\tt{\theta}
\newcommand\kp{\kappa}
\newcommand\ii{{\mathsf{i}}}
\newcommand\one{\mathbbm{1}}
\renewcommand\Re{{\rm Re}\,}
\newcommand\arr{\rightarrow}
\newcommand\Sg{\Sigma}
\newcommand\sd{\sigma_{\rm d}}
\newcommand\sess{\sigma_{\rm ess}}
\newcommand\dd{{\mathsf{d}}}
\newcounter{counter_a}
\newenvironment{myenum}{\begin{list}{{\rm(\roman{counter_a})}}%
{\usecounter{counter_a}
\setlength{\itemsep}{1.ex}\setlength{\topsep}{0.8ex}
\setlength{\leftmargin}{5ex}\setlength{\labelwidth}{5ex}}}{\end{list}}
\numberwithin{figure}{section}
\numberwithin{equation}{section}
\theoremstyle{plain}
\newtheorem*{thm*}{Theorem}
\newtheorem{thm}{Theorem}[section]
\newtheorem{lem}[thm]{Lemma}
\newtheorem{prop}[thm]{Proposition}
\theoremstyle{remark}
\newtheorem{remark}[thm]{Remark}
\theoremstyle{plain}
\newcommand\ov{\overline}
\def\ov{\overline}
   \def\sE{{\mathfrak E}}   \def\sF{{\mathfrak F}}
   \def\sQ{{\mathfrak Q}}   
   \def\sT{{\mathfrak T}}
\def\frs{{\mathfrak s}}
\def\frt{{\mathfrak t}}
\def\fru{{\mathfrak u}}
      \def\dC{{\mathbb C}}
      \def\dI{{\mathbb I}}
   \def\dN{{\mathbb N}}   
      \def\dR{{\mathbb R}}
\def\dS{{\mathbb S}}      
   \def\dZ{{\mathbb Z}}
   \def\sfB{{\mathsf B}}   
      \def\sfF{{\mathsf F}}
   \def\sfH{{\mathsf H}}
   \def\sfT{{\mathsf T}}   \def\sfU{{\mathsf U}}
\def\sfV{{\mathsf V}}
   \def\cB{{\mathcal B}}   \def\cC{{\mathcal C}}
   \def\cE{{\mathcal E}}   \def\cF{{\mathcal F}}
\def\cG{{\mathcal G}}      
\def\cJ{{\mathcal J}}   \def\cK{{\mathcal K}}   \def\cL{{\mathcal L}}
   \def\cN{{\mathcal N}}   \def\cO{{\mathcal O}}
\def\cP{{\mathcal P}}      
\def\cS{{\mathcal S}}   \def\cT{{\mathcal T}}   
      \def\cX{{\mathcal X}}
      \def\sff{{\mathsf f}}
\def\sfg{{\mathsf g}}   \def\sfh{{\mathsf h}}
\newcommand{\dom}{\mathrm{dom}\,}
\def\section{\@startsection{section}{1}\z@{.9\linespacing\@plus\linespacing}%
	{.7\linespacing} {\fontsize{13}{14}\selectfont\bfseries\centering}}
\def\subsection{\@startsection{subsection}{1}\z@{.9\linespacing\@plus\linespacing}%
	{.3\linespacing} {\fontsize{10}{12}\selectfont\bfseries}}	
\def\paragraph{\@startsection{paragraph}{4}%
	\z@{0.3em}{-.5em}%
	{$\bullet$ \ \normalfont\itshape}}
\begin{document}

\title[The Dirichlet Laplacian on a generalized parabolic layer]{
\textsc{Spectral asymptotics of the Dirichlet Laplacian on a generalized parabolic layer}}

\author{Pavel Exner}
\address{Department of Theoretical Physics, Nuclear Physics Institute, Czech Academy of Sciences, 25068 \v Re\v z near Prague, Czechia, and Doppler Institute for Mathematical Physics and Applied Mathematics, Czech Technical University, B\v rehov\'a 7, 11519 Prague, Czechia}
\email{exner@ujf.cas.cz}

\author{Vladimir Lotoreichik}
\address{Department of Theoretical Physics, Nuclear Physics Institute, 	Czech Academy of Sciences, 25068 \v Re\v z, Czech Republic}
\email{lotoreichik@ujf.cas.cz}

\begin{abstract}
We perform quantitative spectral analysis of the self-adjoint Dirichlet Laplacian $\Op$ on an unbounded, radially symmetric (generalized) parabolic layer $\cP\subset\dR^3$. It was known before that $\Op$ has an infinite number of eigenvalues below the threshold of its essential spectrum. In the present paper, we find the discrete spectrum asymptotics for $\Op$ by means of a consecutive reduction to the analogous asymptotic problem for an effective one-dimensional Schr\"o\-dinger operator on the half-line with the potential the behaviour of which far away from the origin is determined by the geometry of the layer $\cP$ at infinity.
\end{abstract}

\maketitle

\section{Introduction}

\subsection{State of the art and motivation}
The topic of this paper is the 
spectral asymptotics for the Dirichlet Laplacian in domains of the form of a generalized parabolic layer. By the layer, we understand the Euclidean domain consisting of the points, whose distance from a hypersurface, in general unbounded one, is less than some given constant. The self-adjoint Laplace operator on unbounded layers subject to the Dirichlet boundary condition has been considered in numerous papers, 
\eg~\cite{CEK04, DOR15, DEK01, DLO17,  ET10, KL14, LL07, LR12}, see also~\cite[Chap. 4]{EK} and the references therein.

Unbounded layers belong to the class of \emph{quasi-cylindrical domains} in the classification~\cite[Dfn.~X.6.1\,(ii)]{EE} introduced  by Glazman, because they do not contain arbitrarily large cubes, but they do contain infinitely many disjoint cubes of a fixed size. For this reason, as a rule, the essential spectrum is non-empty and its threshold is strictly positive; \cf~\cite{DEK01, KL14} and~\cite[Chap.~4]{EK}. A challenging question is to prove the existence of bound states below this threshold and to find their properties. Various general results on the bound states can be found in~\cite{CEK04, DEK01, LL07, LR12} and in~\cite[Chap.~4]{EK}. However, there is no result general enough to claim that this problem is completely understood. In this perspective, more precise analysis of various particular cases deserves attention. Examples of layers treated so far include mildly curved layers~\cite{BEGK01, EKr01}, conical layers~\cite{DOR15, ET10, OP17}, and so-called octant (or Fichera) layers~\cite{DLO17}.

In the present paper, we initiate an investigation of \emph{generalised parabolic layers} by performing the analysis of radially symmetric layers of this kind. With the indicated layer geometry in mind, the previously known general results yield an explicit formula for the threshold of the essential spectrum and imply the existence of infinitely many bound states below it. Our aspiration here is to understand better the nature of this discrete spectrum and to obtain its quantitative properties.

\subsection{Geometric setting}\label{ssec:geom}
Here and in what follows, we use the shorthand notations $\dR_+ := (0,\infty)$, $\cdR_+ := [0,\infty)$, and $\dI_a := (-a,a)$ for $a > 0$. We also introduce conventional Cartesian coordinates $\xx = (x_1,x_2,x_3)$ in the Euclidean space $\dR^3$.

Given the parameters $k > 0$, $\aa > 1$, and $R \ge 0$,
we consider the class of real-valued functions $f\in \cC^\infty(\cdR_+)$
such that $f(0) = \df(0) = 0$ and that
$f(x) = kx^\aa$ holds for all $x \ge R$.
The \emph{generalized symmetric paraboloid} determined by such a function $f$ is defined by
\begin{equation}\label{eq:Sigma}
	\Sg = \Sg(f) := \left\{\big( \xx, f(|\xx|) \big)\in\dR^3 \colon \xx\in\dR^2 \right \}.
\end{equation}
The principal curvatures of $\Sg$ will be denoted by $\kp_1,\kp_2$.
Note that the usual paraboloid corresponds to the choice $\aa = 2$ and $R = 0$ in the above definition. 

The configuration space of our problem is a fixed-width
layer built over $\Sg$, in other words, the $a$-tubular neighborhood of $\Sg$ defined
as\footnote{\distance}
\begin{equation}\label{eq:cP}
	\cP = \cP(f, a) := \big\{ \xx\in\dR^3 \colon \dist\big( \xx, \Sg(f)\big) < a \big\}.
\end{equation}
The domain $\cP = \cP(f,a)$ will be called generalized radially symmetric parabolic layer,
for brevity we will speak of a \emph{generalized parabolic layer} as there is no danger of confusion.

In the following, $\nu(\xx)$ stands for the unit
normal vector to the surface $\Sg$ at the point $(\xx, f(|\xx|))$.
The mapping $\dR^2\ni\xx \mapsto \nu(\xx)$ is smooth
and the orientation of $\nu(\xx)$ is fixed so that its projection on the $x_3$-axis is positive
for $|\xx| \ge R$.

In Subsection~\ref{ssec:Sigma}, we show the existence of a constant $a_\star = a_\star(f) > 0$
satisfying $a_\star\|\kp_j\|_\infty < 1$
for $j=1,2$ 
such that the restriction of the map
\begin{equation}\label{eq:cL}
	\cL\colon \dR^2\times \dR\arr \dR^3,
	\qquad
	\cL(\xx,u) = (\xx, f(|\xx|))^\top + \nu(\xx) u,
\end{equation}
onto $\dR^2\times \dI_a$ is injective for all $a < a_\star$. Consequently,
the generalized parabolic layer of a half-width $a < a_\star$ can alternatively be represented as
the image of $\dR^2\times\dI_a$ under the map $\cL$.
Everywhere in this paper we consider only the generalized parabolic layers the half-width of which satisfies
the condition $a < a_\star$.
%
\subsection{Definition of the operator}
We define a non-negative, symmetric quadratic form on the Hilbert space $L^2(\cP)$ by
\begin{equation}\label{eq:Q_omega_0}
	\sQ [ \psi ] := \| \nb\psi \|_{ L^2(\cP;\dC^3) }^2,
	\qquad
	\dom \sQ := H^1_0(\cP).
\end{equation}
According to~\cite[Lem.~6.1.1 and Lem.~6.1.3]{Dav95}, the quadratic form $\sQ$ is closed and densely defined.
This allows us to introduce the main object of this paper, namely, the self-adjoint operator $\Op$ in $L^2(\cP)$ associated with the form $\sQ$ via the first representation
theorem~\cite[Thm.~VI~2.1]{Ka}.
The operator $\Op$ is nothing but the \emph{Dirichlet Laplacian} on the generalized
parabolic layer $\cP$, the name referring to the fact that the operator $\Op$ acts as follows,
\begin{equation}\label{eq:Op}
\begin{split}
	\Op\psi = -\Dl\psi,	
	\qquad
	\dom\Op = \big\{ \psi\in H^1_0(\cP) \colon \Dl\psi \in L^2(\cP) \big\}.
\end{split}
\end{equation}

\subsection{Notation and the main result}
Before stating the main result of this paper,
we introduce a few further notations. For measurable functions
$\sff_1,\sff_2\colon \dR_+\arr \dR_+$
satisfying $\lim_{E\arr 0^+}\sff_j(E)=  +\infty$,
$j=1,2$, and $\lim_{E\arr 0^+} \frac{\sff_1(E)}{\sff_2(E)} = 1$, we agree to write
\[
	\sff_1(E)\underset{E\searrow 0}{\sim} \sff_2(E).
\]
Furthermore, let $\sfT$ be the semi-bounded self-adjoint operator associated with a quadratic form $\sT$.
We denote by $\sess(\sfT)$ and $\sd(\sfT)$ the essential and the discrete spectrum of $\sfT$, respectively;
by $\s(\sfT)$ we denote the (entire) spectrum of $\sfT$, \ie $\s(\sfT) = \sess(\sfT)\cup\sd(\sfT)$.
We set $\lme(\sfT) := \inf\sess(\sfT)$ and, for a given $j\in\dN$, $E_j(\sfT)$
denotes the $j^{\text{th}}$-eigenvalue of $\sfT$ in the interval $(-\infty, \lme(\sfT))$.
These eigenvalues are arranged in the ascending order and repeated according to their multiplicities.
We define the counting function of $\sfT$ as
\[
	\cN_E(\sfT)
	:=
	\#\big\{j\in\dN \colon E_j(\sfT) < E\big\}, \qquad E \le \lme(\sfT).
\]	
When working with the quadratic form $\sT$, we use the notations $\sess(\sT)$, $\sd(\sT)$, $\s(\sT)$,
$\lme(\sT)$, $E_j(\sT)$, and $\cN_E(\sT)$  instead.
If it is clear from the context, we will not indicate the dependence of the eigenvalues
and the threshold of the essential spectrum on the operator, \ie we will write
$E_j$ and $\lme$ instead of $E_j(\sfT)$ and $\lme(\sfT)$, respectively.

First, we recall spectral properties of the Hamiltonian $\Op$,
which follow from the previously known general results after checking the appropriate
geometric assumptions.
\begin{prop}\label{prop:known}
	For any $a \in\big (0, a_\star\big)$, the following claims hold.
	\begin{myenum}
		\item
		$\sess(\Op) = [\lme,\infty)$, where $\lme = \left (\frac{\pi}{2a}\right )^2$.
		\item $\#\sd(\Op) = \infty$.
	\end{myenum}
\end{prop}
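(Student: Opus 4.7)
\emph{Proof plan.} Both parts of the proposition are stated as consequences of the general theory of Dirichlet Laplacians on curved quantum layers, so the main task is to verify that the generalized parabolic layer $\cP$ satisfies the hypotheses of the appropriate results in~\cite{DEK01,CEK04,LL07,LR12} and~\cite[Chap.~4]{EK}. The plan is to begin by using the diffeomorphism $\cL$ of Subsection~\ref{ssec:geom}, together with the customary unitary rescaling by the square root of the Jacobian $|\det D\cL|$, to identify $\Op$ with a divergence-form elliptic operator on $L^2(\dR^2\times\dI_a)$. The transverse part of the transformed operator is $-\partial_u^2$ on $\dI_a$ with Dirichlet endpoints, whose ground-state eigenvalue is exactly $(\pi/(2a))^2$, modulo curvature-dependent lower-order corrections.

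For (i), the key input is the asymptotic flatness of $\Sg$: a direct computation for a surface of revolution $z=f(r)$ with $f(r)=kr^\aa$ for $r\ge R$ gives
\[
\kp_1(x)=O\big(|x|^{1-2\aa}\big),\qquad
\kp_2(x)=O\big(|x|^{-1}\big)\qquad\text{as }|x|\to\infty,
\]
so the principal curvatures (together with their derivatives) vanish at infinity. Combined with the non-self-intersection condition $a<a_\star$ imposed in Subsection~\ref{ssec:geom}, this places $\Sg$ in the framework of~\cite[Thm.~4.2.2]{EK} (alternatively~\cite{DEK01}): the inclusion $\sess(\Op)\subset[(\pi/(2a))^2,\infty)$ follows from a Persson-type argument applied to the transformed operator restricted to the exterior $|x|>N$ with $N\to\infty$, where the curvature corrections are uniformly small; the reverse inclusion is a standard Weyl-sequence construction with transverse profile $\cos(\pi u/(2a))/\sqrt{a}$ and a tangential envelope localized at a sequence of points of $\Sg$ escaping to infinity.

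For (ii), once (i) is in hand, the strategy is to invoke one of the general criteria in~\cite{DEK01,LL07,LR12} (or~\cite[Chap.~4]{EK}) giving $\#\sd(\Op)=\infty$ under geometric hypotheses that are again readily verified from the decay rates above. The underlying mechanism is that projection onto the transverse ground state produces an effective two-dimensional Schr\"odinger operator on $\Sg$ whose potential has a negative leading term of order $-c|x|^{-2}$ at infinity (arising from $\kp_2\sim|x|^{-1}$), and this borderline long-range behavior is known to accumulate infinitely many eigenvalues at the bottom of the essential spectrum by a Bargmann-type argument. The main point to monitor in writing this out in detail is the bookkeeping of the coefficient $c$ and its sign, so that the effective potential is genuinely attractive; this is nevertheless already settled by the cited general results, and no refined estimate is required for the present qualitative claim.
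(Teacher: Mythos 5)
There is a genuine gap: you treat the non-self-intersection of the layer as an assumption (``the non-self-intersection condition $a<a_\star$ imposed in Subsection~\ref{ssec:geom}''), whereas in the paper the existence of such an $a_\star$ is precisely what has to be proved before any of the general results of~\cite[Chap.~4]{EK} can be invoked, and its verification is the bulk of the paper's proof of this proposition. Boundedness of $\kp_1,\kp_2$ only yields a finite normal curvature radius $\rho_{\rm m}$, hence local injectivity of the normal map; since $\Sg$ is unbounded (with unbounded gradient), global injectivity of the map~\eqref{eq:pi_map} on $\dR_+\tm\dS^1\tm\dI_a$ does not follow from curvature decay alone and requires a separate argument. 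The paper does this by \emph{reductio ad absurdum}: assuming two distinct points $(s_j,\tt_j,u_j)$ with small $|u_j|$ have the same image, it derives a linear system for $u_1,u_2$, computes the $3\times3$ determinant, and splits into the cases $\tt_1=\tt_2$ (handled on a compact part via~\cite[Prop.~B.2]{BEHL16} and, for large $s$, via an explicit lower bound on $u_1$ using $f(x)=kx^\aa$) and $\tt_1\ne\tt_2$ (excluded by the behaviour of $\phi/(\dph\,\df(\phi))$). Without some version of this step your plan verifies only the curvature-radius and asymptotic-planarity hypotheses, not Assumption~(i) of~\cite[Chap.~4]{EK}, so the citation of the general theorems is not yet justified.

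Apart from this, your outline of (i) (transverse Dirichlet ground state $(\pi/(2a))^2$, Persson estimate plus Weyl sequences, curvature decay rates $O(|x|^{-1})$ and $O(|x|^{1-2\aa})$ — note your labels for $\kp_1,\kp_2$ are swapped relative to~\eqref{eq:curvatures}) matches the content of the results the paper cites. For (ii) your mechanism — an effective attractive tail of order $-c|x|^{-2}$ coming from the azimuthal curvature, which in two dimensions (equivalently, a supercritical $-\bigl(c+\frac14\bigr)\rho^{-2}$ tail in the radial reduction) produces infinitely many eigenvalues — is the right heuristic, though ``Bargmann-type argument'' is a misnomer (Bargmann bounds give finiteness); the paper instead simply checks that the total Gauss curvature equals $2\pi\neq 0$ and uses the rotational symmetry to apply~\cite[Cor.~4.2.2 / Thm.~4.4]{EK}, which is shorter and is all that part (ii) requires.
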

To be specific, one has to verify that both the principal curvatures of $\Sg$ are bounded,
and that the mean and the Gauss curvatures of $\Sg$ vanish at infinity, then the item~(i)
of the above proposition follows from~\cite[Prop. 4.2.1]{EK}; see also~\cite{KL14}.
In addition, checking that
the total Gauss curvature of $\Sg$ does not vanish and making use of the rotational symmetry for $\cP$ we
derive item~(ii) of the above proposition from~\cite[Cor. 4.2.2]{EK}.

In this paper we give an alternative proof of the fact that $\#\sd(\Op) = \infty$
and, what is more important, we obtain as the main result the spectral asymptotics of $\Op$.
\begin{thm}\label{thm:main}
	For any $a \in \big(0, a_\star\big)$,
	the counting function of $\Op$ satisfies
	\[
		\cNE(\Op)
		\underset{E\searrow 0}{\sim}
		\frac{1}{2\pi}
		\frac{\aa k}{2^\aa}
		\frac{\sfB\left (\frac32,\frac{\aa}{2}- \frac12  \right ) }{E^{\frac{\aa}{2} -\frac12}},
	\]	
	where $\sfB(\cdot,\cdot)$ is the Euler beta function.	
\end{thm}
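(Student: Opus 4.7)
The plan is to reduce the spectral asymptotics of $\Op$ near $\lme$ to that of a half-line Schr\"odinger operator whose potential is governed by the asymptotic geometry of $\Sg$, via three successive reductions. As a first step I would pull $\Op$ back along the map $\cL$ from~\eqref{eq:cL} (globally injective on $\dR^2\times\dI_a$ for $a<a_\star$) and then conjugate by the square root of the induced Jacobian to obtain a unitarily equivalent self-adjoint operator $H$ on $L^2(\dR^2\times\dI_a, d\xx\,du)$, whose quadratic form has Dirichlet conditions at $u=\pm a$ and curvature-dependent coefficients expressed through $\kp_1,\kp_2$.

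Next, I would exploit that the transverse Dirichlet operator $-\partial_u^2$ on $\dI_a$ has ground-state pair $(\lme,\chi_1)$ with $\chi_1(u)=a^{-1/2}\cos(\pi u/(2a))$. A Dirichlet/Neumann bracketing at a large radius $R_0$ in the longitudinal variable, combined with projection onto $\chi_1$ on the exterior piece, sandwiches the counting function of $H-\lme$ between those of two effective two-dimensional Schr\"odinger operators on $\Sg$. On the exterior piece these coincide, up to higher-order corrections in the curvatures, with $-\Delta_\Sg + V_{\rm eff}$, where $V_{\rm eff}= -\tfrac14(\kp_1-\kp_2)^2$ is the Duclos--Exner--Krej\v{c}i\v{r}\'{\i}k geometric potential. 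For our paraboloid and $r=|\xx|$ large, the standard surface-of-revolution formulas give $\kp_2\sim 1/r$ and $\kp_1=O(r^{1-2\aa})$, hence $V_{\rm eff}(\xx)\sim -1/(4r^2)$.

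I would then use the $\mathrm{SO}(2)$-symmetry of $\Sg$ to decompose into angular-momentum sectors; the sectors $m\neq 0$ carry an extra centrifugal barrier $m^2/r^2$, and an elementary Hardy-type argument shows that their combined contribution to $\cNE$ is of lower order. For $m=0$, I introduce the meridian arclength $s$, $ds=\sqrt{1+f'(r)^2}\,dr$, and unitarize $L^2(\dR_+, r(s)\,ds)$ to $L^2(\dR_+,ds)$ via $\varphi\mapsto\sqrt{r(s)}\,\varphi$. The resulting half-line operator $h=-\partial_s^2+V(s)$ with Dirichlet condition at $0$ has
\[
V(s) = \frac{r''(s)}{2r(s)} - \frac{r'(s)^2}{4r(s)^2} + V_{\rm eff}\big(r(s)\big) + \text{subleading}.
\]
Since $f(r)=kr^\aa$ at infinity yields $s\sim k r^\aa$ and hence $r(s)\sim (s/k)^{1/\aa}$, the geometric piece is $O(s^{-2})$ and is beaten by the slower $V_{\rm eff}(r(s))\sim -k^{2/\aa}/(4 s^{2/\aa})$, since $\aa>1$ forces $2/\aa<2$. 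Therefore $V(s)\sim -k^{2/\aa}/(4\,s^{2/\aa})$ as $s\to\infty$.

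Finally, I invoke the classical Kirsch--Simon asymptotic for a one-dimensional Schr\"odinger operator with slowly decaying attractive tail $V\sim -c\,s^{-2\gamma}$, $\gamma\in(0,1)$:
\[
\cNE(h)\;\underset{E\searrow 0}{\sim}\;\frac{1}{\pi}\int_{\{V<-E\}}\!\!\sqrt{-E-V(s)}\,ds.
\]
With $\gamma=1/\aa$ and $c=k^{2/\aa}/4$, the substitution $s=(c/E)^{\aa/2}t$ followed by $u=t^{2/\aa}$ evaluates the right-hand side as $(c^{\aa/2}/\pi)\,(\aa/2)\,\sfB(3/2,\aa/2-1/2)\,E^{-(\aa/2-1/2)}$; using $c^{\aa/2}=k/2^\aa$ produces exactly the constant announced in Theorem~\ref{thm:main}. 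The main obstacle lies in the bracketing step: one must verify that the error terms from the higher transverse Dirichlet modes, from the non-zero angular sectors, and from the Jacobian and compact-part contributions are all $o(E^{-(\aa/2-1/2)})$. The slow decay $s^{-2/\aa}$ of $V_{\rm eff}$ leaves a comfortable margin above the $s^{-2}$ transformation and fixed-width remainders, but the decoupling radius $R_0$ must be sent to infinity consistently with $E\searrow 0$ so that both sandwich bounds produce the same leading constant.
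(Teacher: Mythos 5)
Your proposal is correct and follows essentially the same route as the paper: rotational (partial-wave) decomposition, straightening in curvature-adapted coordinates, Dirichlet--Neumann bracketing at a large cut radius with errors controlled by the vanishing curvatures, reduction to a half-line Schr\"odinger operator with tail $-\tfrac14 k^{2/\alpha}s^{-2/\alpha}$, and the one-dimensional counting asymptotics of Proposition~\ref{prop:SL}\,(iii), with your evaluation of the constant matching the one in the theorem. The only differences are organizational: the paper decomposes into angular fibers first and never projects onto the transverse ground state (it bounds the exact straightened potential by $s$-only functions and uses the full tensor-product decomposition, so higher transverse modes and the $m\neq0$ fibers contribute only finitely many eigenvalues), and it keeps the cut radius $p$ fixed as $E\searrow 0$, sending $p\to\infty$ only afterwards, which avoids the delicate coupling of the decoupling radius to $E$ that you mention.
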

\begin{remark}
In the particular case when the layer is `genuinely' parabolic, $\alpha=2$, possibly outside a compact, the asymptotic expression on the right-hand side simplifies to $\frac{k}{8\sqrt{E}}$. We note that for larger values of $\alpha$ the accumulation is slower. Moreover, using Stirling formula \cite[6.1.37]{Abramowitz-Stegun} we find that
\[
	\lim_{\aa\arr\infty}
	\frac{1}{2\pi}
	\frac{\aa k}{2^\aa}
	\frac{\sfB\left (\frac32,\frac{\aa}{2}- \frac12  \right ) }{E^{\frac{\aa}{2} -\frac12}}
	=
	\lim_{\aa\arr\infty}
	\frac{k}{\sqrt{2\pi\alpha}2^\aa E^{\frac{\aa}{2} -\frac12}} = \infty
\]	
holds for any fixed $E < 1/4$. This might
formally correspond to the fact that as $\alpha$ grows, the layer is becoming closer to the one with annular cylindrical end for which the essential spectrum extends below $\lme$; \cf~\cite[Prob.~4.10]{EK}. On the other hand, for the (possibly locally deformed) conical layer, $\aa=1$, where the discrete spectrum is also infinite~\cite{ET10} one might ask whether the accumulation rate is obtained, upon passing to the limit $\aa\to 1^+$ in the above asymptotic expression, but this not the case as it is obvious from the fact that the limit $\frac{k}{4\pi}$ is independent of $E$.
The correct spectral asymptotics for the conical layer~\cite[Thm. 1.4]{DOR15} is given by $\cNE(\Op)\sim\frac{k}{4\pi}|\ln E|$ as $E\arr 0^+$. Thus, an abrupt transition from the power-type accumulation
rate of eigenvalues to an exponential one occurs upon switching from $\aa > 1$ to $\aa = 1$.
\end{remark}
In the proof of Theorem~\ref{thm:main} we first proceed as in~\cite[Thm. 1.4]{DOR15}. Namely, we decompose the Hamiltonian $\Op$ into an infinite orthogonal sum of the fiber operators, acting on the two-dimensional
meridian domain each. In our geometric setting, the meridian domain is the generalized parabolic half-strip.
Such a reduction is possible in view of the rotational invariance of the domain $\cP$ with respect to the $x_3$-axis.
The `s-wave' fiber corresponding to radially symmetric functions turns out to produce an infinite discrete spectrum below the threshold $\lme$, while the remaining fibers in the decomposition of $\Op$ altogether produce at most a finite number of eigenvalues below $\lme$, and thus they do not influence the accumulation rate in the vicinity of the threshold. Recall in this respect the conical layer case, where one is able to make a stronger claim, namely that \emph{none} of the fibers except the `s-wave' one contributes to the discrete spectrum.

The spectral analysis of the `s-wave' fiber is
reduced to the well-known properties of the one-dimensional Schr\"odinger operators
corresponding to the differential expression
$-\frac{\dd^2}{\dd s^2} + q$
on the positive semi-axis,
where the bounded
potential $q\colon\dR_+\arr\dR$ asymptotically behaves as
\[
	q(s) \sim -\frac14\pot\quad\;\text{as}\quad s \arr +\infty.
\]
Note that for the conventional paraboloid ($\aa = 2$), the potential $q$ behaves at infinity as the Coulomb potential.
This reduction is the core of the paper. As an intermediate
step, it contains straightening of the underlying generalized parabolic half-strip
in suitably chosen curvilinear coordinates, in the spirit of~\cite{ES89, DE95}; see also~\cite[Chap. 1]{EK}. This step
is  new compared to the conical layer, where the meridian domain is straight right
from the beginning.

\section{Preliminaries}
In this preparatory section we collect the auxiliary material, which is needed in the proof of the main result.
First, in Subsection~\ref{ssec:fibers}, we provide
the standard decomposition of the self-adjoint operator $\Op$ into fibers.
Furthermore, in Subsection~\ref{ssec:param} we derive the arc-length
parametrization for the graph of $f(\cdot)$,
define its signed curvature
and introduce curvilinear coordinates in the
associated generalized parabolic strip.
Then, in Subsection~\ref{ssec:Sigma},
we check that the generalized paraboloid $\Sg$ satisfies some of the
assumptions in~\cite[Chap. 4]{EK}
obtaining in this way a proof of Proposition~\ref{prop:known}.
Finally, we recall in Subsection~\ref{ssec:1D}
several standard spectral results on a class of one-dimensional Schr\"{o}dinger operators we will need in the following.

\subsection{The fiber decomposition of $\Op$}\label{ssec:fibers}
%
In view of the rotational symmetry, it is convenient to express
the Hamiltonian $\Op$ in the cylindrical coordinates.
Let $\dR^2_+$ be the positive half-plane $\dR_+\tm\dR$.
We consider the cylindrical coordinates $(r, z, \tt) \in \dR_+^2 \tm \dS^1$
linked with the Cartesian coordinates $(x_1, x_2, x_3)$ via
the following conventional relations
\begin{equation}\label{eqn:co_cyl}
	x_1 = r\cos\tt,\qquad x_2 = r\sin\tt,\qquad x_3 = z.
\end{equation}
We denote the graph of the function $x\mapsto f(x)$ by
\begin{equation}\label{eq:Gamma}
	\G = \G(f) := \big\{ ( x, f(x) ) \colon x > 0 \big\} \subset \dR^2_+.
\end{equation}
In order to describe in cylindrical coordinates the generalized parabolic layer $\cP = \cP(f, a)$
of the half-width $a \in (0, a_\star)$, we first define its meridian domain
$\cG = \cG(f,a)\subset\dR^2_+$ by
%
\begin{equation*}
	\cG = \cG(f,a) := \big\{ \xx \in \dR_+^2 \colon \dist\big(\xx,\G(f)\big) < a \big\}.
\end{equation*}
Note that the domain $\cG$ can be viewed as a generalized parabolic half-strip.
The generalized parabolic layer $\cP$
in~\eqref{eq:cP} is defined in
the cylindrical coordinates~\eqref{eqn:co_cyl} by
\begin{equation}
	\cP = \cP(f, a) := \cG(f, a)\tm \dS^1.
\end{equation}
The layer $\cP$ can be seen as a sub-domain of $\dR^3$ constructed via rotation
of the meridian domain $\cG$ around the $x_3$-axis.
For later purposes, we split the boundary $\p\cG$ of $\cG$ into two disjoint parts,
\[
	\p_0 \cG := \{ 0 \} \tm \dI_a
	\and
	\p_1 \cG := \p \cG \sm \ov{ \p_0 \cG}.
\]
Next, we introduce the usual cylindrical $L^2$-space and the first order
cylindrical Sobolev space on $\cP$ as
\[
	L_\cyl^2(\cP) := L^2(\cG\tm\dS^1; r\dd r \dd z \dd \tt),\qquad
	H^1_\cyl(\cP)
	:=
	\big\{ \psi \colon \psi, \p_r\psi, \p_z\psi, r^{-1}\p_\tt \psi \in L_\cyl^2(\cP)\big\}.
\]
The space $H^1_\cyl(\cP)$ is endowed with the norm $\|\cdot\|_{H^1_\cyl(\cP)}$,
defined for all $\psi \in H^1_\cyl(\cP)$ by
\[
	\|\psi\|_{H^1_\cyl(\cP)}^2
	:=
	\|\psi\|_{L^2_\cyl(\cP)}^2	
	+
	\int_\cP\bigg(|\p_r \psi|^2 + |\p_z \psi|^2 + \frac{|\p_\tt \psi|^2}{r^2}\bigg)
	r\dd r \dd z \dd\tt.
\]
The expression for the quadratic form $\sQ$ associated with $\Op$ can be rewritten
in the cylindrical coordinates as follows
\[
	\sQ_\cyl[\psi]
	=
	\int_{\dS^1} \int_\cG
	\bigg(|\p_r\psi|^2 + |\p_z\psi|^2 + \frac{|\p_\tt\psi|^2}{r^2} \bigg)r \dd r\dd z\dd \tt.
\]
Following the strategy of~\cite{DOR15, KLO17, ET10},
we consider an orthonormal basis of the Hilbert space $L^2(\dS^1)$ given by
\begin{equation}\label{eq:vm}
	v_m(\tt) = (2\pi)^{-1/2} e^{\ii m \tt},\qquad m\in\dZ.
\end{equation}
For any $m\in\dZ$, we define the mapping
\begin{equation}\label{eq:projector}
	\Pi_m\colon L^2_\cyl(\cP)\arr L^2(\cG; r \dd r \dd z),
	\qquad
	(\Pi_m\psi)(r,z,\tt) = \big( \psi(r,z,\cdot), v_m \big)_{L^2(\dS^1)}.
\end{equation}
Performing a `partial wave' decomposition~\cite[App. to X.1]{RS2}, see also~\cite{DOR15, KLO17, LO16}, with respect to the basis in~\eqref{eq:vm}, we obtain
\begin{equation}\label{eq:orthogonal_decomposition}
	\Op \cong \bigoplus_{m\in\dZ} \sfF_m,
\end{equation}
where the symbol $\cong$ stands for the unitary equivalence relation
and, for all $m\in\dZ$, the operators $\sfF_m$
acting on $L^2(\cG; r\dd r \dd z)$ are called the \emph{fibers} of~$\Op$.
They are associated through the first
representation theorem with closed, densely defined, symmetric,
and non-negative quadratic forms in
the Hilbert space $L^2(\cG;r\dd r\dd z)$
\begin{equation}\label{eq:forms}
\begin{split}
	\sF_m[\psi] & := \sQ_{\rm cyl}[\psi \otimes v_m] =
	\int_\cG \bigg( |\p_r\psi|^2 + |\p_z\psi|^2 + \frac{m^2}{r^2}|\psi|^2 \bigg)r\dd r\dd z,\\
	\dom \sF_m &  := \Pi_m \big( H^1_0(\cP) \big).
\end{split}
\end{equation}
The domain of the operator $\sfF_m$ can
be deduced from the quadratic form $\sF_m$
in the standard way via the first representation theorem.

Next, we introduce the unitary operator
\[
	\sfU \colon  L^2(\cG;r\dd r\dd z) \arr L^2(\cG),
	\qquad
	\sfU \psi := \sqrt{r} \psi.
\]	
This unitary operator allows us to
transform the quadratic forms $\sF_m$
into equivalent ones expressed in a flat metric. Indeed, the quadratic form
$\sF_m$ is unitarily equivalent via $\sfU$ to the form in the Hilbert space
$L^2(\cG)$ defined as
\begin{equation}\label{eqn:fq_flat}
\begin{split}
	\frf_m [ \psi ]
	&:= \sF_m[\sfU^{-1}\psi] =
	\int_\cG
	\bigg(|\p_r\psi|^2 + |\p_z\psi|^2 + \frac{m^2-\frac14}{r^2}|\psi|^2\bigg) \dd r\dd z,\\
	\dom\frf_m & := \sfU(\dom \sF_m).
\end{split}\end{equation}
In fact, it is not difficult to check that $\cC_0^\infty(\cG)$ is a core for the form $\frf_m$ and that
for all $m\ne 0$ its form domain satisfies
\begin{equation}\label{eq:domain_incl}
	\dom\frf_m = \sfU(\dom \sF_m) = H^1_0(\cG).
\end{equation}
Finally, for the sake of convenience, we introduce for $m\in\dZ$
the following shorthand notation for frequently used integrands:
\begin{equation}\label{eq:EF}
	\cE_m[\psi] := |\p_r\psi|^2 + |\p_z\psi|^2 + \frac{m^2}{r^2}|\psi|^2
	\and
	\cF_m[\psi] := |\p_r\psi|^2 + |\p_z\psi|^2 + \frac{m^2 -\frac14}{r^2}|\psi|^2.
\end{equation}

\subsection{The arc-length parametrization of $\G$ and associated coordinates on $\cG$}
\label{ssec:param}
The arc-length parametrization for the curve
$\G = \G(f)$ in~\eqref{eq:Gamma} is given by
\[
	\cdR_+\ni s \mapsto \big( \phi(s),f(\phi(s))\big )\in\dR^2,
\]
where $\phi \colon \cdR_+\arr \cdR_+$ is a
monotonously increasing function such that $\phi(0) = 0$
and that $\lims\phi(s) = \infty$,
and which satisfies the ordinary differential equation
\begin{equation}\label{eq:ODE_phi}
	\dph^2 \left (1+ \df^2(\phi)\right ) = 1.
\end{equation}
On the interval $[R,\infty)$, the above equation
reduces to $\dph^2 \left (1+ \aa^2k^2\phi^{2\aa-2}\right ) = 1$.
The equation~\eqref{eq:ODE_phi} combined with
$\phi(0) = 0$ and $\df(0) = 0$ immediately implies
\begin{equation}\label{eq:phi_easybnds}
	\dph(0) = 1 \and 0 < \dph \le 1.
\end{equation}
The signed curvature of $\G$ is given by
\begin{equation}\label{eq:signedcurv}
	\gg 	
	=
	\big(\df(\phi)\ddph + \ddf(\phi)\dph^2\big)\dph
	-
	\df(\phi)\dph\ddph 
	=
	\ddf(\phi)\dph^3.
\end{equation}
On the interval $[R,\infty)$, the above expression simplifies to
$\gg = \aa(\aa-1) k\dph^3\phi^{\aa-2}$.
Asymptotic properties of $\phi$ and $\gg$ that
will be needed in the following are obtained in Appendix~\ref{app:curvature}.

The unit tangential vector to $\G$ at the point $(\phi(s),f(\phi(s)))$ is given by
$(\dph(s),\,\df(\phi(s))\dph(s))^\top$.
Consequently, the unit normal vector to $\G$ at the same point
can be expressed as
\begin{equation}\label{eq:nu_G}
	\nu_\G(s) = \big(-\df(\phi(s))\dph(s),\,\dph(s)\big)^\top.
\end{equation}
Next, on the half-strip $\Omg :=  \dR_+ \tm \dI_a$,
we introduce the map $\tau\colon \Omg\arr\cG$ by
\begin{equation}\label{eq:tau_k}
	\tau(s,u) =
		\begin{pmatrix}
		\tau_1(s,u)	\\
		\tau_2(s,u)	
		\end{pmatrix}
	:=
	\begin{pmatrix}	
		\phi(s)\\ f(\phi(s))
	\end{pmatrix} +  u\nu_\G(s)
	=
	\begin{pmatrix}
		\phi(s) - u\df(\phi(s))\dph(s)\\	
		f(\phi(s)) + u\dph(s)
	\end{pmatrix}.
\end{equation}
This map defines convenient curvilinear coordinates $(s,u)$
on the generalized parabolic strip $\cG$.
Note also that the Jacobian $\cJ = \cJ(s,u)$ of the map $\tau$ is given by
\begin{equation}\label{eq:cJ}
\begin{split}
	\cJ& =
	\begin{vmatrix}
	\p_s \tau_1 & \p_u \tau_1\\
	\p_s \tau_2 & \p_u \tau_2
	\end{vmatrix}
	=
	\begin{vmatrix}
	\dph - u\ddf(\phi)\dph^2
	- u\df(\phi)\ddph
	  & -\df(\phi)\dph\\
	\df(\phi)\dph +u\ddph & \dph
	\end{vmatrix}\\[0.4ex]
	& =
	\dph^2 - u\ddf(\phi)\dph^3
		+ \df^2(\phi)\dph^2
	=
	1 - u \gg,
\end{split}	
\end{equation}	
where~\eqref{eq:ODE_phi} was used in the last step.
We also set
\begin{equation}\label{eq:gk}
	g(s,u) := (\cJ(s,u))^2 = (1-u\gg(s))^2.
\end{equation}
%

\subsection{Parametrization and curvatures of $\Sg$}\label{ssec:Sigma}
In this subsection we provide the natural parametrization of $\Sg$
as a surface of revolution in the sense~\cite[\S 3.3, Example 4]{dC}. Using this
parametrization we obtain formul{\ae} for the curvatures of $\Sg$ and a parametrization for the generalized
parabolic layer $\cP$. Finally, we check the assumptions on layers~\cite[Chap. 4]{EK} and
eventually prove Proposition~\ref{prop:known}.

The generalized paraboloid $\Sg$ can be alternatively parametrized as
\begin{equation}\label{eq:s_map}
	\dR_+ \tm \dS^1\ni (s,\tt) \mapsto \s(s,\tt)
	:=
	\big(\phi(s)\cos\tt, \phi(s)\sin\tt, f(\phi(s))\big)^\top,
\end{equation}
where the functions $f$ and $\phi$ are as in Subsection~\ref{ssec:geom}
and Subsection~\ref{ssec:param}, respectively.
According to~\cite[\S 3.3, Example 4]{dC}, see also~\cite[\S 4.2.2]{EK},
the principal curvatures of $\Sg$ at the point $\xx = \s(s,\tt)$ are explicitly given by
\begin{equation}\label{eq:curvatures}
\begin{split}
	\kp_1(s,\tt) &
		= -\frac{ \df(\phi(s))\dph(s) }{ \phi(s) }
		= -\frac{ \df(\phi(s)) }{ \phi(s)(1+\df^2(\phi(s)))^{1/2} },\\
	\kp_2(s,\tt) &
		= -\gg(s)
		= -\ddf(\phi(s))\dph^3(s)
		= -\frac{ \ddf(\phi(s)) }{ \big(1+\df^2(\phi(s)\big)^{3/2} }.
\end{split}	
\end{equation}
Hence, the mean $M = \frac12(\kp_1 + \kp_2)$ and the Gauss
$K  =	\kp_1\kp_2$ curvatures of $\Sg$ can be evaluated as
\begin{equation}\label{key}
\begin{split}
	M(s,\tt) &
	= -\frac{ \df(\phi(s))+\df^3(\phi(s))+ \ddf(\phi(s))\phi(s) }
					{ 2\phi(s)(1+\df^2(\phi(s)))^{3/2} },\\
	K(s,\tt) & = \frac{ \df(\phi(s)) \ddf(\phi(s)) }{ \phi(s) (1+\df^2(\phi(s)))^2 }.
\end{split}
\end{equation}
It follows from~\cite[Sec. 4.2.2]{EK},
using $\lims\dph(s) = 0$ (\cf Proposition~\ref{prop:phi_bnd}\,(ii))
that the total Gauss curvature of $\Sg$ is given by
$\cK  := \int_\Sg K  = 2\pi$. Furthermore, the normal vector $\nu_\Sg(s,\tt)$ to $\Sg$ at $\xx = \s(s,\tt)$
with $(s,\tt) \in \dR_+\tm\dS^1$ can be expressed as
\begin{equation}\label{eq:normal}
	\nu_\Sg(s,\tt) = \left(-\df(\phi(s))\dph(s)\cos\tt, -\df(\phi(s))\dph(s)\sin\tt, \dph(s)\right)^\top,
\end{equation}
thus the layer $\cP$ 
is the image of 
$\dR_+ \tm \dS^1 \tm \dI_a$
under the map
\begin{equation}\label{eq:pi_map}
	\dR_+ \tm \dS^1 \tm \dR\ni (s,\tt,u)\mapsto \pi(s,\tt,u)
		:= \big(	\phi(s)\cos\tt, \phi(s) \sin\tt, f(\phi(s))\big)^\top + u\nu_\Sg(s,\tt).
\end{equation}
Now, we have all the tools to prove Proposition~\ref{prop:known}.
\begin{proof}[Proof of Proposition~\ref{prop:known}]
	First, recall that $\Sg$ is a surface of revolution
	with a non-zero total Gauss curvature $\cK = 2\pi$.
	Furthermore, we immediately notice that both the principal curvatures $\kp_j(s,\tt)$, $j=1,2$,
	of $\Sg$ computed in~\eqref{eq:curvatures} are $\cC^\infty$-smooth on $\cdR_+\times\dS^1$,
	pointwise positive, and vanish in the limit $s\arr\infty$; 
	\cf Lemma~\ref{lem:phi_bnd}
	and Proposition~\ref{prop:gamma}\,(i).
	Consequently, the minimal normal curvature radius of $\Sg$
	defined in~\cite[Chap. 4, Assumption (ii)]{EK} and given by
	\[
		\rho_{\rm m} := \big( \max\{\|\kp_1\|_\infty,\|\kp_2\|_\infty\} \big)^{-1}
	\]
	is positive and finite.
	In addition, the layer $\cP$ is asymptotically planar in the sense of~\cite[\S 4.1.2, Assumption (iii)]{EK},
	because both $M(s,\tt)$ and $K(s,\tt)$ vanish in the limit $s\arr \infty$.

	The remaining step is more technical: we have to check that there exists
	$a_\star \in (0,\rho_{\rm m})$ such that the generalized
	parabolic layer $\cP = \cP(f,a)$ in~\eqref{eq:cP}
	is not self-intersecting for all $a \in (0, a_\star)$ in the sense of~\cite[Chap. 4, Assumption~{\rm (i)}]{EK}.
	The latter is equivalent to showing that
	the restriction of the map $\pi(\cdot)$
	in~\eqref{eq:pi_map} onto $\dR_+ \tm \dS^1 \tm \dI_a$
	is injective for all $a\in (0,a_\star)$.
	Upon checking this last assumption,~\cite[Thm. 4.4]{EK}
	yields $\sess(\Op) = [\lme,\infty)$ and $\#\sd(\Op) = \infty$.
		
    We will proceed by \emph{reductio ad absurdum}. If such an $a_\star > 0$ did  not exist,
    then it would be possible to find $(s_j, \tt_j, u_j)\in\dR_+\tm\dS^1\tm\dI_{\rho_m}$, $j=1,2$,
	such that $(s_1,\tt_1,u_1) \ne (s_2,\tt_2,u_2)$, $s_1\le s_2$, and at the same time
	$\pi(s_1,\tt_1,u_1) = \pi(s_2,\tt_2,u_2)$ with $u_1,u_2 \in (-\rho_{\rm m},\rho_{\rm m})$ having arbitrarily small absolute values.
	Without loss of generality we can assume
	that both $u_1$ and $u_2$ are positive,
	because the surface $\Sg$ splits $\dR^3$
	into its hypograph and epigraph, respectively.
	Using the explicit expression of the normal vector in~\eqref{eq:normal} together with the fact
	that $\pi(s_1,\tt_1,u_1) - \pi(s_2,\tt_2,u_2)$ has trivial projections onto all the axes, we get
	\begin{equation}\label{eq:system}
	\begin{cases}
		\df(\phi(s_1))\cos\tt_1 c_1	- \df(\phi(s_2))\cos\tt_2 c_2 = \phi(s_1)\cos\tt_1 - \phi(s_2)\cos\tt_2,\\
		\df(\phi(s_1))\sin\tt_1 c_1	- \df(\phi(s_2))\sin\tt_2 c_2	= \phi(s_1)\sin\tt_1 - \phi(s_2)\sin\tt_2,\\
		c_1 - c_2 =  f(\phi(s_2)) -	f(\phi(s_1)),
	\end{cases}	
	\end{equation}
	where $c_j := u_j\dot\phi(s_j)$ for $j=1,2$.
	The above system of linear equations with respect to $c_1$ and $c_2$
	has a non-trivial solution if, and only if
	\[
		\Dl :=
		\begin{vmatrix}
		1         				   &  -1                       & f(\phi(s_2)) -	f(\phi(s_1)) \\
		\df(\phi(s_1))\cos\tt_1 & -\df(\phi(s_2))\cos\tt_2  &	\phi(s_1)\cos\tt_1 - \phi(s_2)\cos\tt_2\\
		\df(\phi(s_1))\sin\tt_1 & -\df(\phi(s_2))\sin\tt_2  & \phi(s_1)\sin\tt_1 - \phi(s_2)\sin\tt_2\\
		\end{vmatrix}
		 =	0.
	\]
	The determinant $\Dl$ can be explicitly computed, giving
	\[
	\begin{split}
		\Dl
		& =
		\begin{vmatrix}
		-\df(\phi(s_2))\cos\tt_2  &
		\phi(s_1)\cos\tt_1 -\phi(s_2)\cos\tt_2 \\
		-\df(\phi(s_2))\sin\tt_2  &
		\phi(s_1)\sin\tt_1 - \phi(s_2)\sin\tt_2
		\end{vmatrix}\\
		& \qquad\qquad +
		\begin{vmatrix}
		\df(\phi(s_1))\cos\tt_1  &
		\phi(s_1)\cos\tt_1 -\phi(s_2)\cos\tt_2 \\
		\df(\phi(s_1))\sin\tt_1  &
		\phi(s_1)\sin\tt_1 - \phi(s_2)\sin\tt_2
		\end{vmatrix}\\
		& \qquad\qquad +		
		\big(f(\phi(s_2)) -	f(\phi(s_1))\big)
		\begin{vmatrix}
		\df(\phi(s_1))\cos\tt_1  &
		-\df(\phi(s_2))\cos\tt_2 \\
		\df(\phi(s_1))\sin\tt_1  &
		-\df(\phi(s_2))\sin\tt_2
		\end{vmatrix}
		\\
		& = \sin(\tt_2-\tt_1)
		\Big(
			\df(\phi(s_2))\phi(s_1)	-\df(\phi(s_1))\phi(s_2)
			+
			\df(\phi(s_1))\df(\phi(s_2))
			\big[f(\phi(s_1)) - f(\phi(s_2)\big]
		\Big).
	\end{split}
	\]
	Hence, the requirement $\Dl = 0$ can only be satisfied
	in two disjoint cases:
	\begin{myenum}
		\item $\tt := \tt_1 = \tt_2$ and $s_1 < s_2$;
		\item $\tt_1 \ne \tt_2$
		and $\df(\phi(s_2))\phi(s_1)	-\df(\phi(s_1))\phi(s_2)
			+
			\df(\phi(s_1))\df(\phi(s_2))
			\big[f(\phi(s_1)) - f(\phi(s_2)\big] =0$.
	\end{myenum}
	These cases require separate analysis.
	\smallskip
		
	\noindent\emph{Case (i)}.
	Pick $R_1 > R$ such that
	$\aa k\phi(R_1)^{\aa-1}  > 1$.
	There are two sub-cases to distinguish.
	If $s_1 < R_1$, then for simple geometric reasons there is	a constant $R_2 = R_2(f,R_1) > R_1$
	such that necessarily $s_2 < R_2$ holds.
	Hence,~\cite[Prop.~B.2]{BEHL16},
	applied for the image of $(0,R_2)\tm\dS^1$
	under the mapping in~\eqref{eq:s_map},	
	implies that there exists
	$a_1 \in (0,\rho_{\rm m})$ such that the condition  $u_1,u_2 \in (0,a_1)$ is incompatible
	with $s_1 < s_2$.
	
	In the second sub-case ($s_1 \ge R_1$),
	we reduce the linear system~\eqref{eq:system} to
	\[
	\begin{cases}
		\df(\phi(s_1)) c_1 - \df(\phi(s_2)) c_2 =  \phi(s_1)  -\phi(s_2),\\
		c_1 - c_2 = f(\phi(s_2)) - f(\phi(s_1)).
	\end{cases}
	\]
	Solving it and returning to the initial variables $u_j$, $j=1,2$, we find
	\[
	\begin{split}
		u_1
		& =
		\frac{ \phi(s_1) - \phi(s_2) + \df(\phi(s_2))[f(\phi(s_2)
			-f(\phi(s_1)] }
		{ \dph(s_1)[\df(\phi(s_2)
			-\df(\phi(s_1)]},\\
		u_2 &=
		\frac{ \phi(s_1) - \phi(s_2) + \df(\phi(s_1))[f(\phi(s_2)
			-f(\phi(s_1)] }
		{ \dph(s_2)[\df(\phi(s_2)
			-		\df(\phi(s_1)]}.
	\end{split}	
	\]
	For the sake of brevity, set $x_1 := \phi(R_1)$.
	We estimate $u_1$ using
	$\dph \le 1$ and applying Cauchy's mean-value theorem
	\begin{equation*}\label{eq:u_bound2}
	\begin{split}
		u_1
		& \ge
		\inf_{x\in [x_1 ,\infty)} \frac{\df(x_1)\df(x)-1}{\ddf(x)}
		= \inf_{ x\in [x_1 ,\infty) }
		\frac{\aa^2k^2(x x_1)^{\aa-1}-1}{\aa(\aa-1)kx^{\aa-2} }\\
		&\ge
		\inf_{ x\in [x_1 ,\infty) } 		
		\frac{x(\aa^2k^2 x_1^{\aa-1} - x^{1-\aa})}{\aa(\aa-1)k}
		=
		\frac{x_1(\aa^2k^2 x_1^{\aa-1} - x_1^{1-\aa})}{\aa(\aa-1)k} > 0.
	\end{split}
	\end{equation*}
	Hence, there exists $a_2 \in (0,\rho_{\rm m})$ such that $u_1,u_2\in (0,a_2)$
	is incompatible with $s_1 < s_2$.
	\smallskip
	
	\noindent\emph{Case (ii)}. The linear system~\eqref{eq:system}
	reduces to
	\begin{equation}\label{eq:system_red}
	\begin{cases}
	\df(\phi(s_1))\cos\tt_1 c_1	- \df(\phi(s_2))\cos\tt_2 c_2 = \phi(s_1)\cos\tt_1 - \phi(s_2)\cos\tt_2,\\
	\df(\phi(s_1))\sin\tt_1 c_1	- \df(\phi(s_2))\sin\tt_2 c_2	= \phi(s_1)\sin\tt_1 - \phi(s_2)\sin\tt_2.
	\end{cases}	
	\end{equation}
		Solving it and returning to the initial variables $u_j$, $j=1,2$, we find
	\[
		u_j
		=
		\frac{\phi(s_j)}{\dph(s_j)\df(\phi(s_j))
		},
		\qquad j=1,2.
	\]
	In view of Lemma~\ref{lem:phi_bnd}, we conclude that there exists $a_3\in (0,\rho_{\rm m})$ such that the condition
	$u_1,u_2\in(0,a_3)$ can not be satisfied together with $\tt_1\ne \tt_2$.
	 	
	Merging the outcome of the analysis for the Cases (i) and (ii), we get the claim for $a_\star
		= \min \{a_1,a_2,a_3\}$.
\end{proof}
Note that the principal curvatures $\kp_1,\kp_2$ of $\Sg$ computed in~\eqref{eq:curvatures} do not depend on $\tt$. Using this observation we introduce the functions
\begin{equation}\label{eq:xi_p}
\begin{split}
	\xi_p& :=
	a\!\!\sup_{s\in [p,\infty)}
	\max\big\{|\kp_1(s)|,|\kp_2(s)|\big\}
	=
	a\!\!\sup_{s\in [p,\infty)}
	\max\left\{|\gg(s)|, \frac{|\df(\phi(s))|\dph(s)}{\phi(s)}\right\},\\
	\zeta_p &:=\left (\frac{1-\xi_p}{1+\xi_p}\right )^2.
\end{split}
\end{equation}
In view of
the condition $a\|\kp_j\|_\infty < 1$ for $j= 1,2$, Lemma~\ref{lem:phi_bnd} and Proposition~\ref{prop:gamma}\,(i) yield that
the functions $\cdR_+\ni p\mapsto\xi_p,\zeta_p$
satisfy
$0 <\xi_p < 1$, $\lim_{p\arr\infty}\xi_p = 0$, and $\lim_{p\arr\infty}\zeta_p =1$.

\subsection{Spectral properties of auxiliary one-dimensional Schr\"odinger operators}\label{ssec:1D}
%
Consider the one-dimensional Schr\"odinger operator
\begin{equation}\label{eq:SL}
	\sfh_q\psi = -\psi'' + q\psi, \qquad \dom\sfh_q = H^2(\dR_+)\cap H^1_0(\dR_+),
\end{equation}
acting in the Hilbert space $L^2(\dR_+)$ with a real-valued potential $q\in L^\infty(\dR_+)$.
The operator $\sfh_q$ is obviously self-adjoint, and
the corresponding quadratic form will be denoted by $\frh_q$. The next proposition describes relations between the
spectral properties of $\sfh_q$ and the behavior of the potential
$q(s)$ as $s\arr\infty$.
It covers only the classes of potentials required for the proof of our main result.
\begin{prop}\label{prop:SL}
	Let the self-adjoint operator $\sfh_q$ be as in~\eqref{eq:SL} with the potential
	$q\colon\dR_+\arr\dR$ satisfying
	\begin{equation}\label{eq:qq}
		 |q(s)| \le
		 \frac{C}{(1+s)^\beta}
		 \and
		 \lims s^\beta q(s) = -c,
	\end{equation}
	with $\beta \in (0,2]$,  $C > 0$, and $c\ge 0$.
	Then the following claims hold.
	\begin{myenum}
		\item $\sess(\sfh_q) = [0,\infty)$.
		\item For $\beta = 2$ and $c = 0$, it holds $\cN_0(\sfh_q) < \infty$.
		\item For $\beta\in (0,2)$ and $c > 0$, it holds $\cN_0(\sfh_q) = \infty$
		and
		\[
			\cN_{-E}(\sfh_q)
			\underset{E\searrow 0}{\sim}
			\frac{c^\frac{1}{\beta} \sfB\left (
				\frac32, \frac{1}{\beta} - \frac{1}{2}  \right ) }
				{\pi\beta E^{\frac{1}{\beta} -\frac12} },
		\]
		where $\sfB(\cdot,\cdot)$ is the Euler beta-function.
		\end{myenum}		
\end{prop}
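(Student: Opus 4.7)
The three items are of increasing depth, so I would handle them separately.

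For item~(i), the decay hypothesis $|q(s)|\le C(1+s)^{-\beta}$ with $\beta>0$ makes $q$ bounded and vanishing at infinity, so multiplication by $q$ is a relatively compact perturbation of the free Dirichlet Laplacian $\sfh_0$ on $\dR_+$ (whose essential spectrum is $[0,\infty)$), and Weyl's theorem on the stability of the essential spectrum yields $\sess(\sfh_q)=[0,\infty)$.

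For item~(ii), where $\beta=2$ and $c=0$, I would split $q = q_1 + q_2$ with $q_1 := q\chi_{[0,R]}$ and $q_2 := q\chi_{(R,\infty)}$, choosing $R$ so large that $|q_2(s)|\le\varepsilon s^{-2}$ for some $\varepsilon<1/4$, which is possible by the hypothesis $s^2 q(s)\to 0$. Combining this bound on $q_2$ with the classical Hardy inequality $\int_0^\infty|\psi'|^2\,\dd s\ge\tfrac14\int_0^\infty s^{-2}|\psi|^2\,\dd s$ on $H^1_0(\dR_+)$ gives $\sfh_0 + q_2 \ge (1-4\varepsilon)\sfh_0 \ge 0$ in the form sense, hence $\sfh_q \ge (1-4\varepsilon)\sfh_0 + q_1$. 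Since $q_1$ is bounded and compactly supported, the right-hand side has at most finitely many negative eigenvalues (for instance by the Bargmann bound), and the min-max principle yields $\cN_0(\sfh_q)<\infty$.

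Item~(iii) is the substantive statement. A direct computation, via the substitutions $s=(c/E)^{1/\beta}t$ followed by $u=t^\beta$ together with the Euler beta identity, shows that the claimed asymptotic equals the semiclassical Weyl counting integral
\[
\frac{1}{\pi}\int_{\{cs^{-\beta}>E\}}\sqrt{cs^{-\beta}-E}\,\dd s,
\]
which converges precisely under the assumption $\beta<2$, matching the hypothesis. For the operator $\sfh_q$ itself, my plan is to reduce to this integral by Dirichlet--Neumann bracketing on a partition of $\dR_+$ into intervals on which $q$ is approximately constant, using the explicit bound-state count on a constant-potential interval, and summing. The dominant contribution as $E\searrow 0$ comes from the region $s\asymp (c/E)^{1/\beta}\to\infty$, where the profile $q(s)\sim -cs^{-\beta}$ is decisive, while the contributions from bounded $s$ are of lower order thanks to the upper bound in~\eqref{eq:qq}. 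The main obstacle is uniform control of the bracketing error as $E\searrow 0$: the classically allowed region escapes to infinity, so the partition must be adapted to $E$. Since this slow-decay semiclassical asymptotic is classical, going back to Kirsch--Simon, I would invoke a ready-made statement from the literature (e.g.\ Reed--Simon, Vol.~IV, or the original Kirsch--Simon paper) after verifying that our hypotheses fit theirs, rather than reproducing the bracketing argument in detail.
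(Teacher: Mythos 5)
Your proposal is correct, and for the substantive item~(iii) it follows the same route as the paper: the paper simply quotes a ready-made result (Lemma~4.9 of Briet--Kova\v{r}\'{\i}k--Raikov--Soccorsi, with Reed--Simon~XIII as background) rather than redoing the bracketing, exactly as you propose, and your beta-function computation showing that the phase-space integral $\frac1\pi\int_{\{cs^{-\beta}>E\}}\sqrt{cs^{-\beta}-E}\,\dd s$ equals the stated asymptotic constant is accurate. Where you genuinely diverge is in items~(i) and~(ii): the paper handles (i) by checking the averaged-decay criterion $\int_n^{n+1}|q|\,\dd s\arr 0$ and citing Teschl, and it obtains (ii) from the same quoted lemma as (iii), whereas you give short self-contained arguments -- Weyl's theorem via relative compactness of the multiplication by $q$ for (i), and a Hardy-inequality splitting plus the Bargmann bound for (ii). Both of your arguments are sound (the Hardy constant $\tfrac14$ is available on $H^1_0(\dR_+)$, and the tail bound $|q(s)|\le\varepsilon s^{-2}$ with $\varepsilon<\tfrac14$ is exactly what $c=0$, $\beta=2$ provides), and they buy independence from the quoted lemma at the cost of a little extra work. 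One point worth making explicit if you cite a full-line statement for (iii): the quoted asymptotics must be halved for the half-line Dirichlet problem, a factor the paper records in a remark; your version avoids the issue only because your Weyl integral is already taken over $\dR_+$, so make sure the reference you invoke is matched to that normalization. Also note that the claim $\cN_0(\sfh_q)=\infty$ in (iii) should be stated as a consequence of the divergence of $\cN_{-E}(\sfh_q)$ as $E\searrow0$, which your argument yields but does not say.
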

\begin{proof}
	The claim~(i) follows from~\cite[Lem. 9.35]{Te}, because the potential $q$ satisfies the condition
	\[
		\int_n^{n+1} |q(s)|\, \dd s
		\le
		\int_n^{n+1} \frac{C\,\dd s}{(1+s)^\beta}
	   \le
		\frac{C}{(1+n)^\beta}\arr 0,\qquad n\arr\infty.
	\]
	The claims~(ii) and~(iii) follow
	from~\cite[Lem 4.9]{BKRS09},
	see also~\cite[Thm. XIII.9 (a)]{RS4}
	and~\cite[Thm. XIII.82]{RS4}.
	We remark that the original claim of~\cite[Lem 4.9]{BKRS09} is formulated for operators
	on the full line, but the result for half-line operators follows easily with an additional factor $\frac12$ in the asymptotic formula for the eigenvalue counting function.
\end{proof}

\section{Bracketing lemma}
Pick an arbitrary $p > 0$ and define the line segment $\cS\subset\cG$ by
\[
	\cS :=\tau(\{p\}\tm \dI_a),
\]
where the mapping $\tau$ is as in~\eqref{eq:tau_k}.
For the sake of brevity, we mostly avoid indicating the dependence on $p$ as long as there is no danger of misunderstanding.
We define the open sets $\Omg_0 := (0,p)\tm \dI_a$ and $\Omg_1 := (p,\infty)\tm \dI_a$.
Clearly, the line segment $\cS$ splits
the parabolic strip $\cG$ into two
disjoint sub-domains
$\cG_j := \tau(\Omg_j)$, $j=0,1$.
In particular, we have the orthogonal decompositions
\[
	L^2(\cG) = 	L^2(\cG_0)\oplus L^2(\cG_1)
	\and
	L^2(\cG;r\dd r \dd z)
	=
	L^2(\cG_0;r\dd r\dd z)\oplus L^2(\cG_1;r\dd r\dd z).
\]
Next, we introduce for $m\in\dZ$ the following symmetric, densely defined quadratic forms in $L^2(\cG_1)$,
\begin{subequations}\label{eq:frf1}
\begin{align}
	\frf_{m1}^{\rm N}[\psi]  & := \int_{\cG_1}\cF_m[\psi]\,\dd r\dd z,	
	& \dom \frf_{m1}^{\rm N} & := \{ \Psi|_{\cG_1} \colon \Psi\in H^1_0(\cG)\},	
	\label{eq:frf1N}\\
	\frf_{m1}^{\rm D}[\psi]  & := \int_{\cG_1}\cF_m[\psi]\,\dd r \dd z ,	
	& \dom \frf_{m1}^{\rm D} & := H^1_0(\cG_1),\label{eq:frf1D}
\end{align}
\end{subequations}
where the integrand $\cF_m[\psi]$ is as in~\eqref{eq:EF}.
It is easy to see that both the forms $\frf_{m1}^{\rm N}$ and $\frf_{m1}^{\rm D}$
are closed and semi-bounded. Now we are in position to formulate the bracketing lemma which we employ in the following.
\begin{lem}\label{lem:bra}
	For any $p > 0$, $m\in\dZ$, and all $E > 0$ the following claims hold.
	\begin{myenum}
		\item $\cNE(\sF_m) \ge \cNE(\frf_{m1}^{\rm D})$.
		\item $\cNE(\sF_m) \le \cNE(\frf_{m1}^{\rm N}) + C^{\rm N}_m\,$
		for some constant $C^{\rm N}_m = C^{\rm N}_m(p) > 0$.
	\end{myenum}	
\end{lem}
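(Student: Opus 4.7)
The plan is to establish both claims by standard Dirichlet--Neumann bracketing across the cut $\cS$, working with the unitarily equivalent reduced form $\frf_m$ from~\eqref{eqn:fq_flat} instead of $\sF_m$. Since $\sfU$ is unitary, $\cNE(\sF_m) = \cNE(\frf_m)$, and it suffices to bracket $\frf_m$ from below and above by forms that decouple the two pieces $\cG_0$ and $\cG_1$.

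For the lower bound~(i), every $\psi \in \dom \frf_{m1}^{\rm D} = H^1_0(\cG_1)$ extended by zero across $\cS$ gives a function $\tilde\psi \in H^1_0(\cG)$; approximating $\tilde\psi$ by elements of $\cC_0^\infty(\cG)$ (a core of $\frf_m$) and invoking closedness of the form shows that $\tilde\psi \in \dom \frf_m$ with $\frf_m[\tilde\psi] = \frf_{m1}^{\rm D}[\psi]$. The min--max principle therefore yields $E_j(\frf_m) \le E_j(\frf_{m1}^{\rm D})$ for every $j \in \dN$, whence $\cNE(\frf_m) \ge \cNE(\frf_{m1}^{\rm D})$.

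For the upper bound~(ii), introduce the analogous Neumann-type form $\frf_{m0}^{\rm N}$ on $L^2(\cG_0)$ with domain $\{\Psi|_{\cG_0} : \Psi \in \dom \frf_m\}$ and integrand $\cF_m$. Restriction to the two pieces defines a linear injection of $\dom \frf_m$ into $\dom \frf_{m0}^{\rm N} \oplus \dom \frf_{m1}^{\rm N}$ that preserves the form value, since the integrals over $\cG_0$ and $\cG_1$ simply add. Because the decoupled variational class is strictly larger than the image of $\dom \frf_m$ (no trace matching across $\cS$ is enforced), min--max yields $E_j(\frf_{m0}^{\rm N} \oplus \frf_{m1}^{\rm N}) \le E_j(\frf_m)$ for every $j \in \dN$, so that
\[
\cNE(\frf_m) \le \cNE(\frf_{m0}^{\rm N}) + \cNE(\frf_{m1}^{\rm N}).
\]
The assertion then follows upon taking $C^{\rm N}_m(p)$ to be the number of eigenvalues of $\frf_{m0}^{\rm N}$ strictly below $\lme$; what remains is to verify that this number is finite.

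The main technical nuisance is the $m = 0$ fiber, in which the inverse-square potential $-\tfrac{1}{4r^2}$ in $\cF_0$ is singular on the rotation axis $\{r = 0\} \cap \p\cG$ and $\dom\frf_0$ does not coincide with $H^1_0(\cG)$. On the Dirichlet side, extension by zero out of $\cG_1$ avoids this axis altogether, so no issue arises. On the Neumann side, I would verify that $\frf_{00}^{\rm N}$ is semibounded with compact resolvent by combining the cylindrical Hardy inequality (which dominates the singular term by the gradient) with the compact embedding $H^1(\cG_0) \hookrightarrow L^2(\cG_0)$ on the bounded Lipschitz domain $\cG_0$. For $m \neq 0$ the positive bounded potential and the standard Sobolev compactness make this automatic, and the finiteness of $C^{\rm N}_m(p)$ follows in all cases.
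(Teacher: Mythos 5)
Part~(i) of your proposal is correct and is in substance the paper's own argument: extending by zero across $\cS$ (equivalently, restricting $\sF_m$ to functions vanishing on $\cS$) and transplanting the outer piece to $\cG_1$ by the $\sqrt r$ unitary; nothing to object to there.

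Part~(ii), however, has a genuine gap exactly in the fiber that matters, $m=0$. Your inner Neumann-type form $\frf_{00}^{\rm N}$, defined through the integrand $\cF_0$ of \eqref{eq:EF} (which carries the term $-\tfrac{1}{4r^2}$) on restrictions of $\dom\frf_0=\sfU\big(\Pi_0(H^1_0(\cP))\big)$, is \emph{not} semibounded, and the Hardy inequality cannot repair this: the bound $\int|\p_r\psi|^2\,\dd r\ge\tfrac14\int r^{-2}|\psi|^2\,\dd r$ requires a Dirichlet condition at $r=0$, whereas the admissible functions are only of the form $\psi=\sqrt r\,w$ with $w$ in the weighted space $H^1(r\,\dd r\,\dd z)$ near the axis --- precisely the borderline case in which the Hardy constant is saturated and a negative axis contribution survives. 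Indeed, from the identity $|\p_r\psi|^2-\tfrac{|\psi|^2}{4r^2}=r|\p_r w|^2+\tfrac12\p_r(|w|^2)$ one gets, for $\psi_n=\sqrt r\,v_n(r)\chi(z)$ with $\chi\in\cC_0^\infty(\dI_a)$ and $v_n$ the usual logarithmic cut-off ($v_n=1$ for $r\le\varepsilon/n$, $v_n=\log(\varepsilon/r)/\log n$ for $\varepsilon/n\le r\le\varepsilon$, $v_n=0$ otherwise, $\varepsilon$ small), that $\frf_{00}^{\rm N}[\psi_n]=O(1/\log n)-\tfrac12\|\chi\|^2_{L^2(\dI_a)}$ while $\|\psi_n\|^2_{L^2(\cG_0)}\to0$; the Rayleigh quotients tend to $-\infty$, so no finite constant $C^{\rm N}_0=\cN_\lme(\frf_{00}^{\rm N})$ exists. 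For the same reason your additivity step $\frf_0[\Psi]=\frf_{00}^{\rm N}[\Psi|_{\cG_0}]+\frf_{01}^{\rm N}[\Psi|_{\cG_1}]$ is not justified on all of $\dom\frf_0$: the flat-metric integrand in \eqref{eqn:fq_flat} represents $\sF_0[\sfU^{-1}\cdot]$ only up to an axis boundary term, which vanishes on the core but not on the whole form domain. (A further small slip: for $m\ne0$ the potential is positive but not bounded on $\cG_0$; positivity is what you actually use.) This is exactly the difficulty the paper's proof of~(ii) is designed to avoid: the decoupling is performed \emph{before} the $\sqrt r$ substitution, in $L^2(r\,\dd r\,\dd z)$ where the fiber potential is $m^2/r^2\ge0$, and a smooth IMS partition in $s$ is used instead of a sharp Neumann cut, so that both localized pieces are Dirichlet-like (supported in $\ov{\cG_0'}$ and $\ov{\cG_1'}$) at the price of the overlap error $-C_\cX p^{-2}$ and one extra compact-resolvent form; the flat picture with $-\tfrac14 r^{-2}$ is then only ever used on $\cG_1$, where $r$ is bounded away from zero. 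Your scheme can be salvaged by respecting this order of operations (sharp-cut Neumann bracketing for the weighted forms with integrand $\cE_m$, the inner piece then being non-negative with compact resolvent, and the $\sqrt r$ map applied only to the outer piece), but as written the upper bound does not go through for $m=0$.
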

\begin{proof}
	{\rm (i)} First, we define the restriction of the quadratic form
	$\sF_m$, $m\in\dZ$, to functions vanishing on $\cS$,
	\begin{equation}\label{eq:form_0_+}
		\sF_m^\cS[\psi] := \sF_m[\psi],\qquad
		\dom \sF_m^\cS  := \big\{\psi\in \dom \sF_m \colon \psi|_\cS = 0\big\}.
	\end{equation}
	The quadratic form $\sF_m^\cS$ can be naturally decomposed into the orthogonal sum
	$\sF_m^\cS = \sF_{m0}^\cS\oplus \sF_{m1}^\cS$, where the quadratic forms $\sF_{m0}^\cS$ and $\sF_{m1}^\cS$ are defined in the Hilbert spaces $L^2(\cG_0;r\dd r \dd z)$ and $L^2(\cG_1;r\dd r \dd z)$, respectively.
	Using the ordering $\sF_m \prec \sF^\cS_m$
	of the forms and the min-max principle we obtain
	\[
		\cNE(\sF_m) \ge \cNE(\sF_m^\cS) = \cNE(\sF_{m0}^\cS) +  \cNE(\sF_{m1}^\cS),	
		\qquad \forall\, E > 0.
	\]
	It remains to notice that the form $\sF_{m1}^\cS$ is unitarily equivalent
	via
	\begin{equation}\label{eq:U1}
		\sfU_1\colon L^2(\cG_1;r\dd r\dd z)\arr L^2(\cG_1), \qquad \sfU_1\psi := \sqrt{r}\psi,
	\end{equation}
	to the form $\frf_{m1}^{\rm D}$ in~\eqref{eq:frf1D}, and therefore the desired inequality in~{\rm (i)} holds.
	\smallskip
	
   \noindent {\rm (ii)}
	Let $\chi_0\colon \cdR_+\arr [0,1]$
	be a $\cC^\infty$-smooth function satisfying $\chi_0(s) = 1$ for all $s < \frac12$
	and $\chi_0(s) = 0$ for all $s > 1$.
	Define the function $\chi_1\colon \cdR_+\arr [0,1]$ through the identity $\chi_0^2 + \chi_1^2 \equiv  1$
	on $\cdR_+$.	Employing the curvilinear coordinates $(s,u)$ on the parabolic strip $\cG$
	we introduce the cut-off functions $\cX_{j,p}\colon \cG\arr [0,1]$, $j=0,1$, by
	\[
		\cX_{0,p}(s, u) := \chi_0(p^{-1} s)	\and	\cX_{1,p}(s, u) := \chi_1(p^{-1}s),
		\qquad p >0.
	\]
	These cut-off functions can also be viewed as functions of the arguments $(r,z)$.
	Define the expression $X_p \colon\cG\arr\dR_+$ by
	\begin{equation}\label{eq:Weps}
		X_p := |\nb\cX_{0,p}|^2 + |\nb\cX_{1,p}|^2.
	\end{equation}
	Set $\Omg_0' := (0,p)\tm \dI_a$ and $\Omg_1' := (\frac{p}{2},\infty)\tm\dI_a$.
	The parabolic strip $\cG$ can be represented as the union of
	the domains $\cG_0' := \tau(\Omg_0')$ and
	$\cG_1' := \tau(\Omg_1')$, having a non-empty
	bounded intersection $\cG_{01}' := \cG_0'\cap\cG_1'$.
	The expression $X_p$ in~\eqref{eq:Weps} can be pointwise estimated from above as $X_p \le C_\cX p^{-2}\one_{\cG_{01}'}$
	with some constant $C_\cX > 0$, here $\one_{\cG_{01}'}$ is the characteristic function of $\cG_{01}'$.
	Next we define a family of quadratic forms parametrized by $m\in\dZ$ and $j=0,1$ through
	\[
	\begin{split}
		\sE_{mj}^{\rm D}[\psi] 	& :=
		\int_{\cG_j'}\cE_m[\psi] r \dd r \dd z - C_\cX p^{-2}\int_{\cG_{01}'}|\psi|^2 r \dd r \dd z,\\
		\dom \sE_{mj}^{\rm D}   & :=
		\big\{\Psi|_{\cG_j'}\colon \Psi \in \dom\sF_m, {\rm supp}\,\Psi\subset\ov{\cG_j'}\big\},
	\end{split}
	\]
	where the integrand $\cE_m[\psi]$ is as in~\eqref{eq:EF}. Further,
	applying the IMS formula~\cite[\S 3.1]{CFKS} to the quadratic form $\sF_m$, we obtain that 
	$(\cX_{j,p}\psi)|_{\cG_j'} \in \dom\sE_{mj}^{\rm D}$, $j=0,1$ and with a slight
	abuse of notation we have
	\[
		\sF_m [\psi] \ge
		\sE_{m0}^{\rm D}[\cX_{0,p}\psi]
			+ \sE_{m1}^{\rm D}[\cX_{1,p}\psi],
		\qquad \forall\,\psi\in\dom\sF_m.	
	\]
	Hence, by~\cite[Lem. 5.2]{DLR12} we get
	\[
		\cNE(\sF_m) \le \cNE(\sE_{m0}^{\rm D}) + \cNE(\sE_{m1}^{\rm D}), \qquad\forall\,E >0.
	\]
	Notice that the quadratic form $\sE_{m0}^{\rm D}$ corresponds to a self-adjoint operator with a compact resolvent, while the form $\sE_{m1}^{\rm D}$ is unitarily equivalent via $\sfU_1'\colon L^2(\cG_1';r\dd r \dd z)\arr L^2(\cG_1')$, $\sfU_1'\psi := \sqrt{r}\psi$, to the following form
	in the Hilbert space $L^2(\cG_1')$,
	\[
		H^1_0(\cG_1')\ni \psi
		\mapsto 	
		\int_{\cG_1'}\cF_m[\psi]\dd r \dd z - C_\cX p^{-2}\int_{\cG_{01}'} |\psi|^2 \dd r\dd z,
	\]
	which is larger in the sense of ordering than the orthogonal sum $\frf\oplus\frf_{m1}^{\rm N}$ of the form
	$\frf_{m1}^{\rm N}$ in the Hilbert space $L^2(\cG_1)$
	defined in~\eqref{eq:frf1N} and the form in the Hilbert space $L^2(\cG_{01}')$,
	\[
		\frf[\psi] :=		
		\int_{\cG_{01}'}\big(\cF_m[\psi] - C_\cX p^{-2}|\psi|^2\big) \dd r\dd z
		,\qquad 
		\dom \frf :=
		\big\{\Psi|_{\cG_{01}'}\colon \Psi \in H^1_0(\cG_1')\big\}.
	\]
	It only remains to notice that
	the last form also corresponds to a
	self-adjoint operator in the Hilbert space $L^2(\cG_{01}')$
	with a compact resolvent. Thus, the desired
	inequality holds with the constant $C_m^{\rm N} =
	\cN_\lme(\sE_{m0}^{\rm D}) + \cN_\lme(\frf)$.
\end{proof}

\section{Straightening of the meridian domain}
\label{sec:stra}

Recall that the curve $\G$ in~\eqref{eq:Gamma}
is parametrized via the mapping $\cdR_+\ni s\mapsto (\phi(s), f(\phi(s)))$
with $\phi$ satisfying the differential equation~\eqref{eq:ODE_phi}.
Furthermore, recall that the quadratic form $\sF_m$ in~\eqref{eq:forms}
is unitarily equivalent
through the unitary transformation $\sfU$ to the form $\frf_m$ in~\eqref{eqn:fq_flat}
expressed in the flat metric.

Let us introduce auxiliary potentials
$V_m^{\rm C}, V_m\colon\Omg\arr \dR$, $m\in\dZ$,
on the half-strip $\Omg = \dR_+\tm\dI_a$ by the formul\ae
\begin{equation}\label{eq:Vm}
\begin{split}
	V_m^{\rm C}(s,u)
	&:=
	\frac{m^2-\frac14}{\big(\phi(s) - u\df(\phi(s))\dph(s)\big)^2},
	\qquad \\
	V_m(s,u)
	& := \frac{u\ddot\gg(s)}{2 g^{3/2}(s,u)}
	   - \frac{\gg^2(s)}{4 g(s,u)}
   	- \frac54\frac{u^2\dot\gg^2(s)}{g^2(s,u)}
		+ V_m^{\rm C}(s,u).
\end{split}	
\end{equation}
The first three terms in the definition of $V_m$
correspond to the curvature-induced potential arising while straightening
the curved two-dimensional half-strip $\cG$; \cf~\cite[Sec. 1.1]{EK}. The fourth term
$V^{\rm C}_m$ corresponds
to the centrifugal potential written in the $(s,u)$-coordinates.

For the sake of brevity, we introduce shorthand notations
for the integrands,
\begin{equation}
\begin{aligned}[b]
		\cT_m[\psi](s,u)
		& :=
		\frac{|\p_s\psi|^2}{g(s,u)} + |\p_u\psi|^2 + V_m|\psi|^2,\qquad &m&\in\dZ,\\
		\cB_p[\psi](u) & := \frac{\gg'(p) u|\psi(p,u)|^2}{2 g^{3/2}(p,u)},\qquad &p & \ge 0.
\end{aligned}
\end{equation}
Next, we define the unitary operator $\sfV\colon L^2(\cG)\arr L^2(\Omg)$ by
\[
	(\sfV\psi)(s,u) := g(s,u)^{1/4}\,\psi\big(\tau(s,u)\big),
\]
where the map $\tau$ is as in~\eqref{eq:tau_k},
its Jacobian $\cJ$ is given by~\eqref{eq:cJ}
and $g = \cJ^2$ as in~\eqref{eq:gk}.
The quadratic form $\frf_m$ is unitarily equivalent via
the mapping $\sfV$ to the quadratic form
\begin{equation}\label{eq:tm1}
	\frt_m[\psi] := \frf_m[\sfV^{-1}\psi],
	\qquad	
	\dom\frt_m := \sfV(\dom\frf_m),
\end{equation}
on the Hilbert space $L^2(\Omg)$.
Furthermore, recall that $\Omg_1 = (p,\infty)\tm\dI_a\subset\Omg$ and introduce the unitary operator $\sfV_1\colon L^2(\cG_1)\arr L^2(\Omg_1)$ by
\[
	(\sfV_1\psi)(s,u) := g(s,u)^{1/4}\,\psi\big(\tau(s,u)\big).
\]
The quadratic forms $\frf_{m1}^{\rm D}$
and $\frf_{m1}^{\rm N}$ in~\eqref{eq:frf1} can be further unitarily transformed into the forms
\begin{align}
	\frt_{m1}^{\rm D}[\psi] & := \frf_{m1}^{\rm D}[\sfV^{-1}_1\psi],
	&\dom\frt_{m1}^{\rm D}  & := \sfV_1(\dom\frf_{m1}^{\rm D}),
	\label{eq:tm1D}\\
	\frt_{m1}^{\rm N}[\psi] & := \frf_{m1}^{\rm N}[\sfV^{-1}_1\psi],
	& \dom\frt_{m1}^{\rm N}&  := \sfV_1(\dom\frf_{m1}^{\rm N}), 	\label{eq:tm1N}
\end{align}	
on the Hilbert space $L^2(\Omg_1)$.

In the remaining part of this section
we will get more explicit expressions for the forms
$\frt_m$, $\frt_{m1}^{\rm D}$ and $\frt_{m1}^{\rm N}$. 
Let $\Psi \in H^1(\cG)$ and 
$\psi \in H^1(\Omg)$ be connected through
the relation $\Psi \circ \tau = \cJ^{-1/2}\psi$.
Let also $(t_1,t_2)^\top$ and
$(n_1,n_2)^\top$ 
be, respectively, the unit tangential and  normal vectors to $\G$.
Using the Frenet formula we find
\[
\begin{split}
	\p_s(\cJ^{-1/2}\psi)
	&= 
	(\p_r\Psi\circ\tau)\p_s \tau_1
	+
	(\p_z\Psi\circ\tau)\p_s \tau_2\\
	& =
	(\p_r\Psi\circ\tau)t_1(1 - u\gg) 
	+
	(\p_z\Psi\circ\tau)t_2(1 - u\gg)
	=\cJ\left(
	(\p_r\Psi\circ\tau)t_1 
	+
	(\p_z\Psi\circ\tau)t_2\right),\\[0.5ex]
	\p_u(\cJ^{-1/2}\psi) & =
	(\p_r\Psi\circ\tau)\p_u \tau_1
	+
	(\p_z\Psi\circ\tau)\p_u \tau_2
	=
	(\p_r\Psi\circ\tau) n_1
	+
	(\p_z\Psi\circ\tau)n_2,\\
\end{split}
\]
Hence, we obtain
\begin{equation}\label{eq:nablaPsi}
	|(\nabla \Psi)\circ\tau|^2 =
	\frac{|\p_s(\cJ^{-1/2}\psi)|^2}{\cJ^2} + |\p_u(\cJ^{-1/2}\psi)|^2.
\end{equation}
In the spirit of~\cite[Sec. 1.1]{EK}, we find
that $\dom \frt_m = H^1_0(\Omg)$
for all $m\in\dZ\sm\{0\}$
and by performing elementary computations
relying on~\eqref{eq:nablaPsi} we get
\begin{equation*}
\begin{split}
	\frt_m[\psi]
	& =
	\int_{\Omg}
	\frac{|\p_s(\cJ^{-1/2}\psi)|^2}{\cJ} + |\p_u(\cJ^{-1/2}\psi)|^2\cJ +
	V_m^{\rm C}|\cJ^{-1/2}\psi|^2\cJ\\
	& =
	\int_\Omg
	\frac{|\p_s\psi|^2}{\cJ^2} -
	\frac{u\dot\gg\Re(\ov{\psi} \p_s\psi)}
		{\cJ^3} +
	\frac{u^2\dot\gg^2|\psi|^2}{4\cJ^4}
	+ |\p_u\psi|^2 - \frac{\gg
	\Re(\ov{\psi}\p_u\psi)}{\cJ} +
	\frac{\gg^2|\psi|^2}{4\cJ^{2}}
	+ 	V_m^{\rm C}|\psi|^2
	\\
	& =
	\int_\Omg
	\frac{|\p_s\psi|^2}{\cJ^2} -
	\frac{u\dot\gg\p_s(|\psi|^2)}
	{2\cJ^3} +
	\frac{u^2\dot\gg^2|\psi|^2}{4\cJ^4}
	+ |\p_u\psi|^2 - \frac{\gg
		\p_u(|\psi|^2)}{2\cJ} +
	\frac{\gg^2|\psi|^2}{4\cJ^{2}} +
	V_m^{\rm C}|\psi|^2.
\end{split}
\end{equation*}
Integrating by parts in the above formula,
we obtain
\begin{equation}\label{eq:frtm}
\begin{split}
	\frt_m[\psi] & =
	\int_\Omg
	\frac{|\p_s\psi|^2}{\cJ^2}
	+
	|\p_u\psi|^2 +
	\left[
	\frac{u^2\dot\gg^2}{4\cJ^4}
	+
	\p_s\left(\frac{u\dot\gg}
	{2\cJ^3}\right) +
	\p_u\left(\frac{\gg}{2\cJ}\right)
	+
	\frac{\gg^2}{4\cJ^{2}} + V_m^{\rm C}\right] |\psi|^2
	\\
	& =
	\int_\Omg
	\frac{|\p_s\psi|^2}{\cJ^2}
	+
	|\p_u\psi|^2 +
	\left[
	\frac{u^2\dot\gg^2}{4\cJ^4}
	+
	\frac{u\ddot\gg}{2\cJ^3}
	-
	\frac{3u^2\dot\gg^2}{2\cJ^4}
	 -
	\frac{\gg^2}{2\cJ^2}
	+
	\frac{\gg^2}{4\cJ^{2}} + V_m^{\rm C}
	\right] |\psi|^2
	\\
	& =
	\int_\Omg
	\frac{|\p_s\psi|^2}{\cJ^2}
	+
	|\p_u\psi|^2 +
	\left[
	\frac{u\ddot\gg}{2\cJ^3}
	-
	\frac{5u^2\dot\gg^2}{4\cJ^4}
	-
	\frac{\gg^2}{4\cJ^2} + V_m^{\rm C}	\right] |\psi|^2
	\\
	& = \int_\Omg\cT_m[\psi],
\end{split}	
\end{equation}
the boundary terms vanished thanks to the Dirichlet boundary condition.

Analogously, we get for all $m\in\dZ$
%
\[
	\frt_{m1}^{\rm D}[\psi]
	:=
	\int_{\Omg_1}\cT_m[\psi]\,\dd s \dd u,
	\qquad \dom \frt_{m1}^{\rm D} := H^1_0(\Omg_1),
\]
Mimicking the above
computation for the form $\frt_{m1}^{\rm N}$, $m\in\dZ$, we arrive at  $\dom \frt_{m1}^{\rm N} = 
\{\psi|_{\Omg_1}\colon \psi \in H^1_0(\Omg)\}$ and obtain that		 
\[
	\frt_{m1}^{\rm N}[\psi]
	 :=
	\int_{\Omg_1}
	\frac{|\p_s\psi|^2}{\cJ^2} -
	\frac{u\dot\gg\p_s(|\psi|^2)}
	{2\cJ^3} +
	\frac{u^2\dot\gg^2|\psi|^2}{4\cJ^4}
	+ |\p_u\psi|^2 - \frac{\gg
		\p_u(|\psi|^2)}{2\cJ} +
	\frac{\gg^2|\psi|^2}{4\cJ^{2}} + V_m^{\rm C}|\psi|^2.
\]
Again integrating by parts, we finally get
\[
\begin{split}
	\frt_{m1}^{\rm N}[\psi]
	&  =
	 \int_{\Omg_1}
	 \left(
	 \frac{|\p_s\psi|^2}{\cJ^2}
	 +
	 |\p_u\psi|^2 +
	 V_m|\psi|^2\right)\dd s \dd u
	 +
	 \int_{\dI_a}
	 \frac{\dot\gg(p)u|\psi(p,u)|^2}
	 {2\cJ^3(p,u)}\dd u\\
	&=
	\int_{\Omg_1}\cT_m[\psi]\dd s\dd u
	+
	\int_{\dI_a}\cB_p[\psi]\dd u. 	
\end{split}
\]
While the Dirichlet boundary condition
on $\p\cG_1\sm\cS$
is preserved upon straightening of $\cG_1$,
the Neumann boundary condition on the line
segment $\cS$ transforms into the Robin
boundary condition with the coupling function
$\dI_a \ni u \mapsto
\frac{\dot\gg(p) u}{2\cJ^3(p,u)}$. This peculiarity manifests in the presence
of the boundary in the expression for
$\frt_{m1}^{\rm N}$.

\section{Properties of \boldmath{$\cNE(\sfF_m)$} for $m \in \dZ\sm\{0\}$}
\label{sec:nonrad_fib}
In this section, we investigate the spectral counting function of the fiber operators $\sfF_m$ for all $m \in \dZ \sm \{0\}$. First, we show that for all $m \ne 0$ the discrete spectrum of the fiber operators below the threshold $\lme$ is at most finite, and secondly, that this discrete spectrum is empty for all but finitely many such fiber operators. This informal explanation is precisely formulated in the proposition below, the proof of which relies on the min-max principle and on the finiteness of the discrete spectrum for a class of one-dimensional
Schr\"{o}dinger operators stated in Proposition~\ref{prop:SL}\,(ii).
\begin{prop}\label{prop:nonrad_fib}
	Let $\sfF_m$, $m\in\dZ\sm\{0\}$, be the self-adjoint fiber operators
	in the Hilbert space $L^2(\cG;r\dd r\dd z)$	
	associated with the quadratic forms in~\eqref{eq:forms}.
	Then there exists an $M = M(f,a)\in\dN_0$ such that:
	\begin{myenum}
		\item $1 \le \cN_\lme( \sfF_m ) < \infty$ for all $m = 1,2,\dots, M$;
		\item $\cN_\lme( \sfF_m ) = 0$ for all $m > M$.
	\end{myenum}	
\end{prop}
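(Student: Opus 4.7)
Since $\sF_m$ depends on $m$ only through $m^2$, it is enough to treat $m\in\dN$. By~\eqref{eq:domain_incl} all forms $\frf_m$ with $m\ne 0$ share the common domain $H^1_0(\cG)$, and $\frf_{m_1}\preceq\frf_{m_2}$ whenever $|m_1|\le|m_2|$, so the min--max principle renders $m\mapsto\cN_\lme(\sfF_m)$ non-increasing in $|m|$. I plan to prove separately (A) $\cN_\lme(\sfF_m)=0$ for all $m$ large enough, and (B) $\cN_\lme(\sfF_m)<\infty$ for every fixed $m\in\dZ\sm\{0\}$. Setting $M:=\max\{m\in\dN\colon\cN_\lme(\sfF_m)\ge 1\}$ (or $M:=0$ if this set is empty), (A) makes $M$ well defined; then (ii) is tautological, the ``$\ge 1$'' in~(i) follows by monotonicity ($\cN_\lme(\sfF_m)\ge\cN_\lme(\sfF_M)\ge 1$ for $1\le m\le M$), and the ``$<\infty$'' in~(i) is~(B).

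\emph{Proof of (A).} I would work directly with the straightened form $\frt_m$ in~\eqref{eq:frtm}, without any bracketing. Split $V_m=V_m^{\rm C}+W_m$ according to~\eqref{eq:Vm}, where $W_m:=\tfrac{u\ddot\gg}{2g^{3/2}}-\tfrac{\gg^2}{4g}-\tfrac{5u^2\dot\gg^2}{4g^2}$ collects the curvature-induced terms, and set $r(s,u):=\phi(s)-u\df(\phi(s))\dph(s)$, so that $V_m^{\rm C}=(m^2-\tfrac14)/r^2$. The central estimate is the uniform bound
\[
K:=\sup_{(s,u)\in\Omg}|W_m(s,u)|\,r(s,u)^2<\infty.
\]
$W_m$ is bounded on $\Omg$ (since $\gg,\dot\gg,\ddot\gg\in L^\infty$ and $g\ge(1-a\|\gg\|_\infty)^2>0$), so the product is obviously bounded on any compact subset. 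Near the tip $s=0^+$ one has $r\to 0$ with $W_m$ bounded; for $s\to\infty$ the Appendix~\ref{app:curvature} asymptotics together with $\phi(s)\sim(s/k)^{1/\aa}$ yield $r^2=O(s^{2/\aa})$ and (via $\gg^2\sim s^{-(4-2/\aa)}$, the slowest-decaying summand of $W_m$) $|W_m|\,r^2=O(s^{-(4-4/\aa)})\to 0$ as $\aa>1$. Consequently, for every integer $m$ with $m^2-\tfrac14\ge K$ one has $V_m\ge 0$ on all of $\Omg$; combining this with the Dirichlet Poincar\'e inequality $\int_{\dI_a}|\p_u\psi|^2\,\dd u\ge\lme\int_{\dI_a}|\psi|^2\,\dd u$ and~\eqref{eq:frtm} yields $\frt_m[\psi]\ge\lme\|\psi\|^2$ for every $\psi\in H^1_0(\Omg)$, hence $\cN_\lme(\sfF_m)=0$.

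\emph{Proof of (B).} Fix $m\ne 0$. Lemma~\ref{lem:bra}\,(ii) and the straightening of Section~\ref{sec:stra} give $\cN_\lme(\sfF_m)\le\cN_\lme(\frt_{m1}^{\rm N})+C^{\rm N}_m(p)$ for every $p>0$. Using $V_m^{\rm C}\ge 0$, I bound $V_m\ge -U(s)$ with $U(s):=\|W_m(s,\cdot)\|_{L^\infty(\dI_a)}\ge 0$. Choosing $p$ large so that $|\dot\gg(p)|$ is small, I absorb the boundary term $\int_{\dI_a}\cB_p[\psi]\,\dd u$ via the local trace bound $\|\psi(p,\cdot)\|^2_{L^2(\dI_a)}\le C\int_{(p,p+1)\tm\dI_a}(|\p_s\psi|^2+|\psi|^2)\,\dd s\,\dd u$. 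This yields a quadratic-form lower bound for $\frt_{m1}^{\rm N}$ by the separable operator $-\eta\p_s^2-\p_u^2-q(s)$ on $\Omg_1$ with Dirichlet conditions on $\{u=\pm a\}$, where $\eta>0$ and $q\ge 0$ coincides with $U$ outside the compact interval $[p,p+1]$. The Appendix~\ref{app:curvature} decay $\gg(s)^2\sim s^{-(4-2/\aa)}$ combined with $\aa>1$ gives both $|q(s)|\le C(1+s)^{-2}$ and $\lim s^2 q(s)=0$, so Proposition~\ref{prop:SL}\,(ii) applies to the one-dimensional factor $-\eta\p_s^2-q$. Separation of variables then expresses the count in the lower-bounding form as $\sum_n\cN_{\lme-\lambda_n}(-\eta\p_s^2-q)$, a finite sum terminating when $\lambda_n>\lme+\sup q$, each of whose summands is finite by Proposition~\ref{prop:SL}\,(ii); this yields $\cN_\lme(\sfF_m)<\infty$.

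The main obstacle is the uniform estimate $K<\infty$ in step~(A): the blow-up of $V_m^{\rm C}=(m^2-\tfrac14)/r^2$ at the tip must be compensated by $r^2\to 0$ together with the boundedness of $W_m$, while at infinity the growth $r^2\sim s^{2/\aa}$ must be dominated by the decay of $W_m$. This balance works only under the standing assumption $\aa>1$ and relies on the sharp asymptotics of $\gg$ and $\phi$ from Appendix~\ref{app:curvature}.
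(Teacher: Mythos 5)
Your argument is correct, and for the emptiness of the discrete spectrum at large $|m|$ it is essentially the paper's argument in different clothing: the paper bounds $\frt_m$ from below by a form $\frs_m$ with the $s$-dependent potential $U_m$ in~\eqref{eq:Um} and shows that $\phi^2\gg^2$, $\phi^2|\ddot\gg|$, $\phi^2\dot\gg^2$ are bounded, which is the same balance as your uniform bound $\sup_\Omg |W_m|\,r^2<\infty$ (note that your $W_m$ is in fact $m$-independent), followed by the transverse Poincar\'e inequality. Where you genuinely diverge is the finiteness step for fixed $m\ne 0$: the paper never invokes Lemma~\ref{lem:bra} here; it simply drops the nonnegative centrifugal term on the \emph{whole} straightened strip, obtains the potential $q$ of~\eqref{eq:q} satisfying~\eqref{eq:qq} with $\beta=2$, $c=0$ globally, and concludes via the tensor-product lower bound $(1+\xi_0)^{-2}\frh_q\otimes\fri+\fri\otimes\frh^{\rm D}_a$ and Proposition~\ref{prop:SL}\,(ii). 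Your route through $\frt_{m1}^{\rm N}$ forces you to handle the Robin-type boundary term $\cB_p$ by a trace inequality; this works, but it buys nothing for this proposition (truncation is only needed later, for the sharp asymptotics of the $m=0$ fiber), and it imports one small imprecision: after separation of variables your one-dimensional factor on $(p,\infty)$ carries a Neumann (free) condition at $s=p$, whereas Proposition~\ref{prop:SL} is stated for the Dirichlet half-line operator~\eqref{eq:SL}. This is harmless — the two counting functions differ by at most one, e.g.\ by the rank-one argument from \cite[\S 9.3, Thm.~3]{BS} that the paper itself uses in Proposition~\ref{prop:upper_bnd} — but it should be said. On the other hand, your explicit monotonicity of $m\mapsto\cN_\lme(\sfF_m)$ and the definition $M:=\max\{m\in\dN\colon\cN_\lme(\sfF_m)\ge1\}$ make the ``$1\le$'' part of item (i) cleaner than in the paper, where this point is left implicit.
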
	
\begin{proof}
	First of all, recall that the form $\frt_m$ in~\eqref{eq:tm1} is unitarily equivalent to the form $\sF_m$ which is represented by the operator $\sfF_m$.
	We organize the argument into three steps.
	\smallskip

	\noindent {\it Step 1:~bracketing}.
	%
	We infer that the ordering $\frs_m\prec \frt_m$ holds,
	where the form $\frs_m$ is given by
	\begin{equation}\label{eq:frs_m}
		\frs_m[\psi]
		 =
		\int_\Omg
		\left (\frac{|\p_s \psi|^2}{(1+\xi_0)^2} + |\p_u \psi|^2 + U_m|\psi|^2\right )\dd u \dd s,\qquad
		\dom \frs_m = H^1_0(\Omg),	
	\end{equation}
	with $\xi_0$ as in \eqref{eq:xi_p} for $p=0$ and the potential $U_m$ defined by
	\begin{equation}\label{eq:Um}
		U_m(s) =
		\frac{1}{\phi^2(s)}
		\left (
			 \frac{m^2-\frac14}{(1+\xi_0)^2} - \frac{\phi^2(s)\gg^2(s)}{4 (1 - \xi_0 )^2}
			 -
			 \frac{a\phi^2(s)|\ddot\gg(s)|}{2(1-  \xi_0)^3}
			 -
			 \frac{5}{4}\frac{a^2\phi^2(s)\dot\gg^2(s)}{(1-\xi_0)^4}
		\right ).
	\end{equation}
	\smallskip
	
	\noindent {\it Step 2:~large $|m|\in\dN$.}
	In view of Propositions~\ref{prop:phi_bnd}\,(i) and~\ref{prop:gamma}\,(i)-(iii),
	the functions
	\[
		s\mapsto \phi^2(s)\gg^2(s),\qquad s\mapsto \phi^2(s)|\ddot\gg(s)|\and	s\mapsto \phi^2(s)\dot\gg^2(s)
	\]
	are all bounded on $\cdR_+$.
	Hence, for any $|m| \ge M$ with $M\in\dN$ large enough,
	the potential $U_m$ is pointwise positive. Fur such $m$'s we obtain
	\[
		\cN_\lme(\sfF_m) = \cN_\lme(\frf_m) = \cN_\lme(\frt_m) \le \cN_\lme(\frs_m) = 0.
	\]	
	\smallskip
	\noindent {\it Step 3:~small $|m|\in\dN$.}
	In view of the asymptotics of $\gg(s)$, $\dot\gg(s)$
	and $\ddot\gg(s)$ as $s\arr\infty$ shown in
   Proposition~\ref{prop:gamma}, the potential $q\colon \dR_+\arr \dR$ defined by
	\begin{equation}\label{eq:q}
		q(s)
		:=
		-\frac{1}{\zeta_0}
		\left (
			\frac{\gg^2(s)}{4}
			+
			\frac{a|\ddot\gg(s)|}{2(1-\xi_0)}
			+
			\frac{5}{4}\frac{a^2\dot\gg^2(s)}{(1-\xi_0)^2}	
		\right ),
	\end{equation}
	where $\xi_0$ and $\zeta_0$ are as in~\eqref{eq:xi_p} with $p = 0$,
	satisfies the condition~\eqref{eq:qq}
	with $\beta = 2$ and $c = 0$
	and therefore by Proposition~\ref{prop:SL} the negative discrete spectrum
	of the  self-adjoint operator
	$\sfh_q$ in~\eqref{eq:SL} is finite.
	
	Next, we define the auxiliary quadratic form
	\begin{equation}\label{eq:form_Dir}
		H^1_0(\dI_a) \ni\psi \mapsto \frh^{\rm D}_a[\psi] :=  \|\psi'\|^2_{L^2(\dI_a)}
	\end{equation}
	in the Hilbert space $L^2(\dI_a)$
	and consider the closed, densely defined, symmetric
	and semi-bounded  quadratic form
	in $L^2(\Omg) = L^2(\dR_+)\otimes L^2(\dI_a)$  having the tensor product structure
	$\frh :=	(1+\xi_0)^{-2}\frh_q\otimes \fri + \fri\otimes\frh^{\rm D}_a$,
	where $\frh_q$ is the form represented
	by $\sfh_q$ in~\eqref{eq:SL} and  $\fri$ stands for the quadratic form of the identity operator on a generic Hilbert space.
	Comparing the definition~\eqref{eq:frs_m} of the form $\frs_m$,
	the expression~\eqref{eq:Um} for the potential $U_m$, and the expression~\eqref{eq:q}
	for the potential $q$, we obtain that the ordering $\frh\prec \frs_m$ holds for all $m \ne 0$.	
	Furthermore, let $\lm_n^{\rm D}(a)$, $n\in\dN$, be the eigenvalues
	of $\frh^{\rm D}_a$ enumerated in a non-decreasing way.
	Note also that we have $\lme = \lm_1^{\rm D}$.
	Since $\frh_q$ is semi-bounded, there exists $N = N(f,a)$
	such that $(1+\xi_0)^{-2}\frh_q \ge \lme - \lm_N^{\rm D}$.
	Consequently, for any $|m| < M$ we get
	\[
	\begin{split}
		\cN_\lme(\sfF_m) & = \cN_\lme(\frf_m) =
		\cN_\lme(\frt_m) \le \cN_\lme(\frs_m)\\
		& \le
		\sum_{n=1}^\infty\cN_{\lme - \lm_n^{\rm D}}\big((1+\xi_0)^{-2}\frh_q\big)
		\le
		\sum_{n=1}^N\cN_0\big((1+\xi_0)^{-2}\frh_q\big)
		= N\cdot \cN_0(\frh_q) < \infty,
	\end{split}	
	\]
which concludes the proof.
\end{proof}

\section{Asymptotics of \boldmath{$\cNE(\sfF_0)$}}
The fiber operator $\sfF_0$ corresponding to $m = 0$
requires a separate consideration. Recall that this
fiber operator is associated with the quadratic form
$\sF_0$ on the Hilbert space $L^2(\cG;r\dd r\dd z)$ defined
as
\begin{equation}\label{eq:form0}
	\sF_0[\psi] = \int_\cG \cE_0[\psi]r\dd r\dd z,
	\qquad
	\dom \sF_0 = \Pi_0(H^1_0(\cP)),
\end{equation}
where $\cE_0$ is as in~\eqref{eq:EF}.
Throughout this section we use the shorthand notation
\begin{equation}\label{eq:g}
	\sfg_{\aa,k}(E) :=
	\frac{1}{2\pi}
	\frac{\aa k}{2^\aa}
	\frac{\sfB\left (\frac32,\frac{\aa}{2}- \frac12  \right ) }{E^{\frac{\aa}{2} -\frac12}}.
\end{equation}

\subsection{A lower bound on \boldmath{$\cNE(\sfF_0)$}}
In this subsection we obtain an asymptotic lower bound on the counting function
$\cNE(\sfF_0)$ in the limit $E\arr 0^+$. To this aim we modify the strategy
used in the proof of Proposition~\ref{prop:nonrad_fib}  by additionally involving Lemma~\ref{lem:bra}\,(i).
At the end we reduce the problem to the spectral asymptotics of a one-dimensional Schr\"odinger operator covered by Proposition~\ref{prop:SL}\,(iii).
\begin{prop}\label{prop:lower_bnd}
	Let the self-adjoint fiber operator $\sfF_0$
	in the Hilbert space $L^2(\cG;r\dd r\dd z)$
	be associated with the form~\eqref{eq:form0}
	and let the function $\sfg_{\aa,k}(\cdot)$ be as in~\eqref{eq:g}. 	
	Then
	\[
		\liminf_{E\arr 0^+}\frac{\cN_{\lme - E}(\sfF_0)}{\sfg_{\aa,k}(E)} \ge 1.
	\]
\end{prop}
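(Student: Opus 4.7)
The proof strategy combines three ingredients already developed in the paper: the Dirichlet bracketing of Lemma~\ref{lem:bra}\,(i), the straightening transformation of Section~\ref{sec:stra}, and the one-dimensional asymptotic formula of Proposition~\ref{prop:SL}\,(iii). Fix a parameter $p > 0$ (to be sent to $+\infty$ at the end, so that $\xi_p\to 0$). By Lemma~\ref{lem:bra}\,(i) together with the unitary equivalence $\frf_{01}^{\rm D}\cong \frt_{01}^{\rm D}$ from Section~\ref{sec:stra}, one has $\cN_{\lme-E}(\sfF_0) \ge \cN_{\lme-E}(\frt_{01}^{\rm D})$, so it suffices to bound the latter from below. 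The plan is to dominate $\frt_{01}^{\rm D}$ pointwise from above by a separable majorant form on $\Omg_1 = (p,\infty)\times\dI_a$.

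To construct this majorant, I would use the pinch $g(s,u)\in[(1-\xi_p)^2,(1+\xi_p)^2]$ on $\Omg_1$ to replace $|\p_s\psi|^2/g$ by $|\p_s\psi|^2/(1-\xi_p)^2$, discard the two manifestly nonpositive terms $-\gg^2/(4g)$ and $-5u^2\dot\gg^2/(4g^2)$ of \eqref{eq:Vm}, bound $u\ddot\gg/(2g^{3/2})\le a|\ddot\gg|/(2(1-\xi_p)^3)$, and finally control the centrifugal piece by $V_0^{\rm C}\le -1/(4(\phi(s)+a)^2)$, which follows from $|u\df(\phi)\dph|\le a$ (an immediate consequence of $\dph^2(1+\df^2(\phi))=1$) together with $\phi(p)>a$ for $p$ large enough. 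The resulting one-variable upper bound
\[
W_p(s) := \frac{a\,|\ddot\gg(s)|}{2(1-\xi_p)^3} - \frac{1}{4(\phi(s)+a)^2}
\]
produces the majorant
\[
\tilde\frt_p[\psi] := \int_{\Omg_1}\!\!\left(\frac{|\p_s\psi|^2}{(1-\xi_p)^2} + |\p_u\psi|^2 + W_p(s)|\psi|^2\right)ds\,du,\qquad \dom\tilde\frt_p = H^1_0(\Omg_1),
\]
satisfying $\frt_{01}^{\rm D}\prec\tilde\frt_p$, hence $\cN_{\lme-E}(\frt_{01}^{\rm D})\ge \cN_{\lme-E}(\tilde\frt_p)$ by the min-max principle.

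The form $\tilde\frt_p$ has a genuine tensor-product structure on $L^2(p,\infty)\otimes L^2(\dI_a)$, namely $(1-\xi_p)^{-2}\frh_{(1-\xi_p)^2 W_p}\otimes\fri + \fri\otimes \frh^{\rm D}_a$ (with Dirichlet condition at $s=p$). The transverse factor $\frh^{\rm D}_a$ has ground-state eigenvalue $\lme$, and restricting to its ground state sector gives $\cN_{\lme-E}(\tilde\frt_p)\ge\cN_{-(1-\xi_p)^2E}(\sfh_{(1-\xi_p)^2W_p})$. After the harmless translation $s\mapsto s-p$ (which preserves the counting function asymptotics), this one-dimensional operator falls into the class covered by Proposition~\ref{prop:SL}. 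The quantitative input is the asymptotic $W_p(s)\sim -1/(4\phi(s)^2)\sim -k^{2/\aa}/(4\,s^{2/\aa})$ as $s\to\infty$: from Appendix~\ref{app:curvature} one has $\phi(s)\sim(s/k)^{1/\aa}$, while $|\ddot\gg(s)|$ decays like $s^{-(4-1/\aa)}$, which is strictly faster than $s^{-2/\aa}$ precisely because $\aa>1$. Therefore the centrifugal contribution dominates $W_p$ at infinity.

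Applying Proposition~\ref{prop:SL}\,(iii) with $\beta = 2/\aa\in(0,2)$ and $c=(1-\xi_p)^2 k^{2/\aa}/4$, and using the identity $c^{\aa/2}=(1-\xi_p)^\aa k/2^\aa$, a short algebraic simplification gives
\[
\cN_{-(1-\xi_p)^2 E}\!\left(\sfh_{(1-\xi_p)^2W_p}\right) \underset{E\searrow 0}{\sim} (1-\xi_p)\,\sfg_{\aa,k}(E).
\]
Chaining the inequalities above yields $\liminf_{E\searrow 0}\cN_{\lme-E}(\sfF_0)/\sfg_{\aa,k}(E)\ge 1-\xi_p$ for every $p>0$; sending $p\to\infty$ and invoking $\xi_p\to 0$ completes the argument. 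The main substantive obstacle is verifying that $V_0^{\rm C}$ alone provides the leading constant $k^{2/\aa}/4$, i.e.\ that every curvature-induced term appearing in the straightening potential $V_0$ is strictly subdominant as $s\to\infty$. This is precisely where the hypothesis $\aa>1$ enters decisively: it guarantees simultaneously that $\beta = 2/\aa\in(0,2)$ lies in the admissible range of Proposition~\ref{prop:SL}\,(iii), and that the error terms generated by $\ddot\gg,\dot\gg,\gg^2$ all decay faster than $s^{-2/\aa}$ so that they do not perturb the coefficient.
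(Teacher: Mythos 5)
Your proposal is correct and follows essentially the same route as the paper: Dirichlet bracketing via Lemma~\ref{lem:bra}\,(i), straightening to $\frt_{01}^{\rm D}$, domination by a separable form with a one-dimensional effective potential whose leading tail is the centrifugal term $\sim -\tfrac14 k^{2/\aa}s^{-2/\aa}$, application of Proposition~\ref{prop:SL}\,(iii) with $\beta=2/\aa$, and the limit $p\to\infty$. The only (harmless) deviation is your bound $V_0^{\rm C}\le -\tfrac14(\phi(s)+a)^{-2}$ in place of the paper's $-\tfrac14(1+\xi_p)^{-2}\phi(s)^{-2}$, which merely changes the intermediate constant from $\zeta_p^{\aa/2}(1-\xi_p)^{1-\aa}$ to $1-\xi_p$; both tend to $1$.
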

\begin{proof}
	We reduce finding the asymptotic lower bound	to the analysis of one-dimensional Schr\"odinger operators.
	Recall first that Lemma~\ref{lem:bra}\,(i)
	and the construction described in Section~\ref{sec:stra}
	imply that
	\[
		\cNE(\sfF_0) \ge \cNE(\frt_{01}^{\rm D}),	\qquad \forall\, E >0.
	\]
	Let the auxiliary functions $\xi_p$ and $\zeta_p$ be as in~\eqref{eq:xi_p}.
	Next we introduce the quadratic form
	\begin{equation}\label{eq:fru}
		\fru_{01}^{\rm D}[\psi]	= \int_{\Omg_1}
		\left (
			\frac{|\p_s\psi|^2}{(1 - \xi_p)^2}	+ |\p_u\psi|^2 + W_p(s)|\psi|^2
		\right )\dd u \dd s,
		\qquad
		\dom \fru_{01}^{\rm D}   = H^1_0(\Omg_1),	
	\end{equation}
	where $\Omg_1 = (p,\infty)\tm \dI_a$ and the potential $W_p(s)$ is given by
	\begin{equation}\label{eq:Wp}
		W_p(s) =
		\frac{a|\ddot\gg(s)|}{2(1- \xi_p)^3}
		-
		\frac{1}{4(1+\xi_p)^2}\frac{1}{\phi^2(s)}.
	\end{equation}
	For $g$ in~\eqref{eq:gk} and $V_{0}$ in~\eqref{eq:Vm} with $m = 0$,
	we have $(1-\xi_p)^2\le g(s,u) \le (1+\xi_p)^2$ and $V_{0}(s,u) \le W_p(s)$ for all $(s,u)\in \Omg_1$,
	hence the ordering of the forms
	$\frt_{01}^{\rm D}\prec \fru_{01}^{\rm D}$ holds.

	Now we can reduce the asymptotics analysis
	to investigation of one-dimensional Schr\"odinger operators.
	Define the potential $q \colon \dR_+\arr \dR$ as
	\begin{equation}\label{eq:q1}
		q(s) := W_p(s-p)\,(1-\xi_p)^2.
	\end{equation}
	Proposition~\ref{prop:phi_bnd}\,(i) and
	Proposition~\ref{prop:gamma}\,(ii) imply that the potential $q$ in~\eqref{eq:q1} satisfies the condition~\eqref{eq:qq} in Proposition~\ref{prop:SL}
	with $\beta = \frac{2}{\aa} \in (0,2)$ and $c =  \frac{\zeta_p}{4} k^{\frac{2}{\aa}}$.
	Consequently, using the decomposition
	\[
		\fru_{01}^{\rm D} = (1-\xi_p)^{-2}\frh_q\otimes \fri + \fri\otimes \frh^{\rm D}_a
		\cong
		\bigoplus_{n\in\dN}
		\left( \lm_n^{\rm D}(a) + (1-\xi_p)^{-2}\frh_q\right )
	\]
	with $\frh^{\rm D}_a$ as in~\ref{eq:form_Dir},
	and applying Proposition~\ref{prop:SL}\,(iii),
	we obtain that
	\[
	\begin{split}
		\liminf_{E\arr 0^+}
		\frac{\cNE(\sfF_0)}{\sfg_{\aa,k}(E)}
		&
		\ge
		\liminf_{E\arr 0^+}
		\frac{\cNE(\frt_{01}^{\rm D})}{\sfg_{\aa,k}(E)}\\
		& \ge
		\liminf_{E\arr 0^+}
		\frac{\cNE(\fru_{01}^{\rm D})}{\sfg_{\aa,k}(E)}
		\ge
		\liminf_{E\arr 0^+}
		\frac{\cN_{-E(1-\xi_p)^2}(\frh_q)}{\sfg_{\aa,k}(E)}\\
		& =
		\liminf_{E\arr 0^+}
		\frac{1}{2\pi}
		\frac{\aa k}{2^\aa}
		\frac{\sfB\left (\frac32,\frac{\aa}{2}- \frac12  \right ) }{E^{\frac{\aa}{2} -\frac12}}
		\frac{1}{\sfg_{\aa,k}(E)}
		\zeta_p^{\frac{\aa}{2}} (1-\xi_p)^{1 -\aa}
		 =
		\zeta_p^{\frac{\aa}{2}} (1-\xi_p)^{1 -\aa}.
	\end{split}	
	\]
	Eventually, passing to the limit $p \arr \infty$ in the above
	inequality and making use of $\xi_p\arr 0$, $\zeta_p\arr 1$ as $p\arr\infty$ we obtain the claim.
\end{proof}

\subsection{An upper bound on \boldmath{$\cNE(\sfF_0)$}}
Finally, we derive an asymptotic upper bound on $\cNE(\sfF_0)$.
To this aim we combine the strategy
used in the proof of Proposition~\ref{prop:nonrad_fib} 
with Lemma~\ref{lem:bra}\,(ii).
\begin{prop}\label{prop:upper_bnd}
	Let the self-adjoint fiber operator $\sfF_0$
	in the Hilbert space $L^2(\cG;r\dd r\dd z)$
	be associated with the form~\eqref{eq:form0}
	and let the function $\sfg_{\aa,k}(\cdot)$ be as in~\eqref{eq:g}.
	Then
	\[
		\limsup_{E\arr 0^+}\frac{\cNE(\sfF_0)}{\sfg_{\aa,k}(E)} \le 1.
	\]
\end{prop}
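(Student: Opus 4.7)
The proof follows the strategy of Proposition~\ref{prop:lower_bnd} with the inequalities reversed and with Lemma~\ref{lem:bra}(ii) in place of Lemma~\ref{lem:bra}(i), the principal novelty being the treatment of the Robin-type surface contribution that arises on the artificial cross section $\cS$ after straightening. First, Lemma~\ref{lem:bra}(ii) together with the unitary equivalence of $\frf_{01}^{\rm N}$ with $\frt_{01}^{\rm N}$ recorded in Section~\ref{sec:stra} yields
\[
	\cNE(\sfF_0) \le \cNE(\frt_{01}^{\rm N}) + C_0^{\rm N},
\]
and the additive constant is asymptotically negligible against $\sfg_{\aa,k}(E)$, so it suffices to produce a matching asymptotic upper bound on $\cNE(\frt_{01}^{\rm N})$.

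On $\Omg_1 = (p,\infty)\tm\dI_a$ the bound $g(s,u) \le (1+\xi_p)^2$, the elementary inequality $\phi(s) - u\df(\phi(s))\dph(s) \ge \phi(s)(1-\xi_p)$ (immediate from the definition~\eqref{eq:xi_p} of $\xi_p$ and from $|u|\le a$), and the explicit formula~\eqref{eq:Vm} for $V_0$ combine to give a pointwise lower bound on the bulk integrand of $\frt_{01}^{\rm N}$ by that of the separable form with kinetic coefficient $(1+\xi_p)^{-2}$ in front of $|\p_s\psi|^2$ and potential
\[
	\widetilde W_p(s) := -\frac{a|\ddot\gg(s)|}{2(1-\xi_p)^3} - \frac{\gg^2(s)}{4(1-\xi_p)^2} - \frac{5a^2\dot\gg^2(s)}{4(1-\xi_p)^4} - \frac{1}{4(1-\xi_p)^2\phi^2(s)}.
\]
The surface integral $\int_{\dI_a}\cB_p[\psi]\,\dd u$ is bounded in absolute value by $\eps_p\int_{\dI_a}|\psi(p,u)|^2\,\dd u$ with $\eps_p := a|\dot\gg(p)|/(2(1-\xi_p)^3) \to 0$ as $p\to\infty$ (Proposition~\ref{prop:gamma}), and the standard one-dimensional trace inequality on the slab $(p,p+1)\tm\dI_a$ absorbs it into the bulk at the cost of an $O(\eps_p)$ modification of the kinetic coefficient and of the potential on $[p,p+1]\tm\dI_a$, a compactly supported perturbation which does not affect the leading behavior at infinity.

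The resulting lower-bound form tensor-decomposes on $L^2(\Omg_1) = L^2((p,\infty))\otimes L^2(\dI_a)$ as $\alpha_p \frh_{\widetilde q}\otimes\fri + \fri\otimes\frh^{\rm D}_a$, with $\alpha_p = (1+\xi_p)^{-2} + O(\eps_p)$, $\frh^{\rm D}_a$ as in~\eqref{eq:form_Dir}, and $\widetilde q(s) := \alpha_p^{-1}\widetilde W_p(s-p)$ up to the compactly supported perturbation just introduced. Propositions~\ref{prop:phi_bnd}(i) and~\ref{prop:gamma} then guarantee that $\widetilde q$ satisfies~\eqref{eq:qq} with $\beta = 2/\aa$ and $c \to k^{2/\aa}(1+\xi_p)^2/\bigl(4(1-\xi_p)^2\bigr)$ as the perturbation disappears. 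Since $\lm_1^{\rm D}(a) = \lme$ and the higher transverse modes have eigenvalues strictly above $\lme$, only the lowest transverse mode contributes to the leading asymptotics, so Proposition~\ref{prop:SL}(iii) produces an asymptotic upper bound on $\cNE(\frt_{01}^{\rm N})/\sfg_{\aa,k}(E)$ whose value behaves like $(1+\xi_p)(1-\xi_p)^{-\aa}$, and letting $p\to\infty$ together with $\xi_p,\eps_p\to 0$ gives the claim.

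The delicate step is the absorption of the Robin-type boundary term: unlike the bulk defects, which are uniformly controlled by $\xi_p$, it is a genuine surface contribution whose smallness has to be extracted from the asymptotic decay $\dot\gg(p)\to 0$ of Proposition~\ref{prop:gamma}, and its treatment must not disturb either the leading kinetic coefficient $(1+\xi_p)^{-2}$ or the leading behavior of $\widetilde q$ at infinity that drives the one-dimensional asymptotics.
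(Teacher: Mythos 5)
Your proposal follows the paper's proof in all essentials: Lemma~\ref{lem:bra}\,(ii) together with the straightening of Section~\ref{sec:stra}, the pointwise bounds $(1-\xi_p)^2\le g\le(1+\xi_p)^2$ and $V_0\ge U_p$ with exactly the paper's potential $U_p$ from~\eqref{eq:Up}, the tensor decomposition against $\frh^{\rm D}_a$ with only the lowest transverse mode contributing to the leading order, Proposition~\ref{prop:SL}\,(iii) with $\beta=2/\aa$ and $c=k^{2/\aa}/(4\zeta_p)$, and finally $p\arr\infty$; your constant $(1+\xi_p)(1-\xi_p)^{-\aa}$ coincides with the paper's $\zeta_p^{-\aa/2}(1+\xi_p)^{1-\aa}$.

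The one genuine divergence is the treatment of the boundary term at $s=p$, and there your argument has two small gaps. The paper keeps the term: it minorizes $\frt_{01}^{\rm N}$ by $\frs_{01}^{\rm N}$ in~\eqref{eq:frsN}, whose one-dimensional fiber $\frh_q'$ is defined on $H^1(\dR_+)$ with a Robin-type contribution at the origin, and then removes both the boundary term and the non-Dirichlet condition in one stroke via the rank-one comparison $|\cN_{-E}(\frh_q')-\cN_{-E}(\frh_q)|\le 1$ from \cite[\S 9.3, Thm.~3]{BS}. You instead absorb the boundary term by a trace inequality on the slab $(p,p+1)\tm\dI_a$. This is workable, but first, after absorption the reduced one-dimensional form still lives on $H^1((p,\infty))$ (functions in $\dom\frt_{01}^{\rm N}$ do not vanish on the cut), whereas Proposition~\ref{prop:SL} concerns the operator $\sfh_q$ with a Dirichlet condition at the origin; so you still need the ``counting functions differ by at most one'' step that the absorption was presumably meant to avoid, and you never state it. Second, the absorption produces a variable kinetic coefficient and an extra potential supported on $[p,p+1]$, which places the reduced operator outside the class covered by Proposition~\ref{prop:SL}; the claim that this compactly supported modification ``does not affect the leading behavior'' is true but needs a short additional argument (for instance one more Dirichlet--Neumann decoupling at $s=p+1$, finiteness of the spectrum of the compact piece below $\lme$, and once more a boundary-condition comparison). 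Both points are routine to repair, but the paper's device of carrying the boundary term into the fiber and invoking the rank-one perturbation result settles them simultaneously and more economically.
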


\begin{proof}
	We reduce finding the asymptotic upper bound again
	to the analysis of one-dimensional Schr\"odinger operators.
	Recall that Lemma~\ref{lem:bra}\,(ii)
	and the construction described in Section~\ref{sec:stra}
	imply that
	\[
		\cNE(\sfF_0) \le \cNE(\frt_{01}^{\rm N}) + C_0^{\rm N},	\qquad \forall\, E >0.
	\]
	Let the auxiliary functions $\xi_p$ and $\zeta_p$ be as in~\eqref{eq:xi_p}.	
	Introduce the quadratic form
	\begin{equation}\label{eq:frsN}
	\begin{split}
		\frs_{01}^{\rm N}[\psi]
		& =
		\int_{\Omg_1}
		\left (\frac{|\p_s\psi|^2}{(1+\xi_p)^2} + |\p_u\psi|^2 + U_p(s) |\psi|^2\right )
		\dd s	\dd u - \int_{\dI_a}\frac{a|\dot\gg(p)||\psi(p,u)|^2}{2(1-\xi_p)^3}\dd u,\\
		\dom\frs_{01}^{\rm N} & :=
		\big\{\Psi|_{\Omg_1}\colon \Psi \in H^1_0(\Omg)\big\},
	\end{split}
	\end{equation}
	where $\Omg_1 = (p,\infty)\tm \dI_a$ and the potential $U_p$ is given by
	\begin{equation}\label{eq:Up}
		U_p(s)
		:=
		-\frac{\gg^2(s)}{4(1-\xi_p)^2}
		-
		\frac{5a^2\dot\gg^2(s)}{4(1-\xi_p)^4}
		-
		\frac{a|\ddot\gg(s)|}{2(1- \xi_p)^3}
  		-
		\frac{1}{4(1-\xi_p)^2}\frac{1}{\phi^2(s)}.
	\end{equation}
	For $g$ in~\eqref{eq:gk} and $V_{0}$ in~\eqref{eq:Vm} with $m = 0$,
	we have $(1-\xi_p)^2\le g(s,u) \le (1+\xi_p)^2$ and $V_0(s,u) \ge U_p(s)$ for all $(s,u)\in \Omg_1$,
	hence the ordering of the forms
	$\frs_{01}^{\rm N}\prec \frt_{01}^{\rm N}$ holds.
	
	Define the potential $q \colon \dR_+\arr \dR$ as	
	\begin{equation}\label{eq:q2}
		q(s) := U_p(s-p)(1 + \xi_p)^2.
	\end{equation}	
	Proposition~\ref{prop:phi_bnd}\,(i)
	and Proposition~\ref{prop:gamma}\,(i)-(iii)
	imply
	that $q$ satisfies the condition~\eqref{eq:qq}
	with $\beta = \frac{2}{\aa}\in (0,2)$ and
	$c =\frac{k^{\frac{2}{\aa}}}{4\zeta_p}$.
	Next we define the quadratic form
	\[
		H^1(\dR_+) \ni \psi
		\mapsto \frh_q'[\psi] :=
		\int_{\dR_+}\left (|\psi'(s)|^2 + q(s)|\psi(s)|^2\right)\dd s - \frac{a|\dot\gg(p)||\psi(0)|^2}{2(1-\xi_p)\zeta_p},
	\]
	noting that the self-adjoint operator $\sfh_q'$ corresponding to the quadratic form
	$\frh_q'$ is a rank-one perturbation of the operator $\sfh_q$ defined in~\eqref{eq:SL}
	with the potential as in~\eqref{eq:q2}. Hence by~\cite[\S 9.3, Thm. 3]{BS} we have
	\begin{equation}\label{}
		|\cN_{-E}(\frh_q') - \cN_{-E}(\frh_q)| \le 1,\qquad \forall\, E >0.
	\end{equation}
	In the tensor-product decomposition of $\frs_{01}^{\rm N}$
	into the orthogonal sum,
	\[
		\frs_{01}^{\rm N} = (1+\xi_p)^{-2}\frh_q'\otimes\fri + \fri\otimes \frh^{\rm D}_a \cong
		\bigoplus_{n\in\dN}\big( \lm_n^{\rm D}(a) + (1+\xi_p)^{-2}\sfh_q'\big),
	\]
	only finitely many summands have non-empty
	discrete spectrum below the
	threshold $\lm_1^{\rm D}(a) = \lme$
	and for all of them except for the lowest one
	the discrete spectrum below $\lme$ is finite.
	Making now use of Proposition~\ref{prop:SL}\,(iii) with $c = \frac{k^{\frac{2}{\aa}}}{4\zeta_p} $
	and $\beta = \frac{2}{\aa}$ we obtain
	\[
	\begin{split}
		\limsup_{E\arr 0^+}
		\frac{\cNE(\sfF_0)}{\sfg_{\aa,k}(E)}
		&
		\le
		\limsup_{E\arr 0^+}
		\frac{\cNE(\frt_{01}^{\rm N})}{\sfg_{\aa, k}(E)}
		 \le
		\limsup_{E\arr 0^+}
		\frac{\cNE(\frs_{01}^{\rm N})}{\sfg_{\aa, k}(E)}\\
		& \le
		\limsup_{E\arr 0^+}
		\frac{\cN_{-E(1+\xi_p)^2}(\sfh_q')}{\sfg_{\aa,k}(E)}
		=
		\limsup_{E\arr 0^+}
		\frac{\cN_{-E(1+\xi_p)^2}(\sfh_q)}{\sfg_{\aa,k}(E)}\\
		& =
		\limsup_{E\arr 0^+}
		\frac{1}{2\pi}
		\frac{\aa k}{2^\aa}
		\frac{\sfB\left (\frac32,\frac{\aa}{2}- \frac12  \right ) }{E^{\frac{\aa}{2} -\frac12}}
		\frac{1}{\sfg_{\aa,k}(E)}
		\zeta_p^{-\frac{\aa}{2}} (1+\xi_p)^{1 -\aa}
		 = \zeta_p^{-\frac{\aa}{2}} (1+\xi_p)^{1 -\aa}.
	\end{split}	
	\]
	Passing to the limit $p \arr \infty$
	in the above
	inequality we arrive at the sought claim.
\end{proof}

\section{Proof of Theorem~\ref{thm:main}}
With all the preparations made above, the proof of the main result turns out to be very short.
By Proposition~\ref{prop:nonrad_fib} we get
\[
\begin{split}
	\liminf_{E\arr 0^+}\frac{\cNE(\Op)}{\sfg_{\aa,k}(E)}
	& =
	\liminf_{E\arr 0^+}\frac{\cNE(\sfF_0)}{\sfg_{\aa,k}(E)},\\
	\limsup_{E\arr 0^+}\frac{\cNE(\Op)}{\sfg_{\aa,k}(E)}
	& =
	\limsup_{E\arr 0^+}\frac{\cNE(\sfF_0)}{\sfg_{\aa,k}(E)}.
\end{split}
\]
and furthermore, applying Propositions~\ref{prop:lower_bnd} and~\ref{prop:upper_bnd} we conclude that
\[
	\lim_{E\arr 0^+}\frac{\cNE(\Op)}{\sfg_{\aa,k}(E)} = 1,
\]
which is our main result. \qed

\section{Discussion}

Let us briefly outline a few possible extensions of the results obtained here.
Recall first that a domain $\Omg_1\subset\dR^3$ is said to be a \emph{local perturbation} of the
domain $\Omg_2\subset\dR^3$, if $\Omg_1\sm K = \Omg_2\sm K$ for a compact set $K\subset\dR^3$.
Without much effort, our main result in Theorem~\ref{thm:main}
allows for an extension to all local perturbations of the generalized, radially symmetric parabolic layers.
It is worth to stress that such perturbed domains need not necessarily be radially symmetric themselves.

Furthermore, one can introduce into the present model an \emph{Aharonov-Bohm-type magnetic field} along the axis of the layer  in the spirit of~\cite{EK18, KLO17}. In our geometric setting, we expect that such a singular magnetic field can neither `switch off' the infiniteness of the discrete spectrum nor will it modify the principal term in the spectral asymptotics. Note that this conjectured behaviour would be in sheer contrast to that of the conical layers~\cite{KLO17}.

Apparently, an analysis similar to the present one can also be performed for the self-adjoint three-dimensional
\emph{Schr\"odinger operator with an attractive $\dl$-interaction} of constant strength supported on the (generalized) paraboloid $\Sg$ in~\eqref{eq:Sigma}. Taking the results of~\cite{BEL14, LO16, OP17} into account,
one may conjecture that the spectral asymptotics will be the same as in Theorem~\ref{thm:main}.
However, proof of such a claim might be technically more involved than for the Dirichlet layers.

Finally, it is worth noting that the analogous spectral problem can be also considered for the self-adjoint
\emph{Robin Laplacian} in the unbounded domain $\{(\xx,x_3)\in\dR^3\colon x_3 > f(|\xx|)\}$, lying above the surface $\Sg$. With a reference to~\cite{BPP17, P16} we expect the infiniteness of the discrete spectrum for the Robin boundary condition with a negative parameter, in other words, for an attractive boundary interaction. In view of~\cite{BPP17}, however, the principal term in the spectral asymptotics might not be the same as in Theorem~\ref{thm:main}.

\begin{appendix}

\section{The signed curvature of $\G$} \label{app:curvature}
%
In this appendix, we analyze properties of the signed curvature $\gg$ of the curve $\G$ defined in~\eqref{eq:signedcurv}.
This material can be seen as an exercise in the differential geometry, however, the claims we need are scattered and not easy to find in textbooks. Recall that the curve $\G$ is parametrized by $\cdR_+\ni s \arr (\phi(s),f(\phi(s)))$,
where the increasing function $\phi\colon\cdR_+\arr\cdR_+$ fulfils $\phi(0) = 0$ and
satisfies the ordinary differential equation~\eqref{eq:ODE_phi}.
In the remaining part of this appendix, all the functions depend on $s$ and their derivatives are taken with respect to that
variable. For the sake of brevity, the indication of the dependence on $s$ is occasionally dropped.

First, we formulate and prove an
auxiliary lemma on the second principal curvature $\kp_2$ of $\Sg$,
explicitly given in~\eqref{eq:curvatures}.
\begin{lem}\label{lem:phi_bnd}
	Let $\phi\colon\cdR_+\arr\cdR_+$ be the solution  of~\eqref{eq:ODE_phi} with $\phi(0) = 0$.
	Then the function
	\[
		\cdR_+ \ni p\mapsto\sup_{s\in [p,\infty)}\frac{\df(\phi(s))\dph(s)}{\phi(s)}
	\]
	is bounded and vanishes as $p\arr \infty$.
\end{lem}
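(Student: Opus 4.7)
The plan is to set $g(s):=\df(\phi(s))\dph(s)/\phi(s)$ and show that $g$ extends to a continuous function on $\cdR_+$ which tends to $0$ as $s\to\infty$; both assertions in the lemma then follow, since continuity plus vanishing at infinity yields $\sup_{s\ge 0}g(s)<\infty$, and hence $p\mapsto\sup_{s\in[p,\infty)}g(s)$ is bounded, while the vanishing at infinity immediately gives the second claim. I will split the analysis of $g$ at a point $s_0$ characterised by $\phi(s_0)=R$, which exists because $\phi$ is continuous and monotonically increasing with $\phi(0)=0$, and because the ODE~\eqref{eq:ODE_phi} forces $\phi(s)\to\infty$ as $s\to\infty$ (indeed, the bound $\dph\le 1$ together with the identity $\dph=(1+\df^2(\phi))^{-1/2}$ rules out $\phi$ remaining bounded, for then $\dph$ would be bounded below by a positive constant and $\phi$ would grow linearly).

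On the interval $[0,s_0]$, the strategy is purely qualitative: since $f\in\cC^\infty(\cdR_+)$ with $f(0)=\df(0)=0$, Taylor's theorem gives $\df(x)=x\,\ddf(0)+O(x^2)$ near $0$, so the quotient $\df(x)/x$ extends continuously to $x=0$ with value $\ddf(0)$. Composing with the continuous function $\phi$, which satisfies $\phi(0)=0$ and is continuously differentiable on $\cdR_+$ with $\dph(0)=1$ by~\eqref{eq:phi_easybnds}, we see that $g(s)=(\df(\phi(s))/\phi(s))\,\dph(s)$ extends continuously to $s=0$ with value $\ddf(0)$. Hence $g$ is continuous on the compact interval $[0,s_0]$ and in particular bounded there.

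On the tail $[s_0,\infty)$, the strategy is explicit. Plugging $f(x)=kx^{\aa}$ into~\eqref{eq:ODE_phi} yields the closed form
\[
g(s)=\frac{\aa k\,\phi(s)^{\aa-2}}{\sqrt{1+\aa^2 k^2\phi(s)^{2\aa-2}}},\qquad s\ge s_0.
\]
Since $\aa>1$ implies $2\aa-2>0$, the denominator is bounded from below by $\aa k\phi(s)^{\aa-1}$, so that
\[
0<g(s)\le \frac{1}{\phi(s)},\qquad s\ge s_0.
\]
Combined with $\phi(s)\to\infty$, this gives both the boundedness of $g$ on $[s_0,\infty)$ and $\lim_{s\to\infty}g(s)=0$. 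Merging the two regimes, $g$ is bounded on $\cdR_+$ and $g(s)\to0$ as $s\to\infty$, which is exactly what is needed.

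I do not anticipate any real obstacle here; the only points that require mild care are the continuous extension at $s=0$ (which uses the assumption $\df(0)=0$ built into the class of admissible profiles) and the fact that $\phi(s)\to\infty$, both handled above. The estimate on the tail is sharp enough in order of decay (namely $O(\phi^{-1})$) to be reused later when the precise rates enter the spectral asymptotics.
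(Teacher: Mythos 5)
Your proposal is correct and follows essentially the same route as the paper: continuity at the origin via the Taylor expansion of $\df$ (using $\df(0)=0$ and $\dph(0)=1$), plus decay at infinity controlled by $1/\phi$ together with $\phi(s)\to\infty$. The only cosmetic difference is that on the tail you substitute the explicit profile $f(x)=kx^{\aa}$, whereas the paper uses the general identity $\df(\phi)\dph=\df(\phi)\big(1+\df^2(\phi)\big)^{-1/2}\le 1$ to get the same $1/\phi$ bound.
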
	
\begin{proof}
	First, we observe that the function
	$\dR_+\ni s	\mapsto \frac{\df(\phi(s))\dph(s)}{\phi(s)}$
	is $\cC^\infty$-smooth on $\dR_+$.
	Moreover, using that $\phi(0) = 0$, $\dph(0) = 1$
	and the Taylor expansion $\df(x) = \ddf(0)x + o(x)$ as $x\arr 0^+$
	we get
	\[
		\lim_{s\arr 0^+}\frac{\df(\phi)\dph}{\phi} = \ddf(0)
		\and
		\lim_{s\arr \infty}\frac{\df(\phi)\dph}{\phi}
		=
		\lim_{s\arr \infty}
		\frac{1}{\phi}\left(\frac{\df^2(\phi)}{1+\df^2(\phi)}\right)^{\frac12}
		=
		\lim_{s\arr \infty}\frac{1}{\phi} = 0.
	\]	
	The above two limits and smoothness
		of $\frac{f(\phi)\dph}{\phi}$ yield
		the claims.
\end{proof}	
Next, we prove a proposition,
on the asymptotic behaviour
of $\phi$ and its derivatives
up to the third, in the limit $s\arr\infty$.
\begin{prop}\label{prop:phi_bnd}
	The solution $\phi\colon\cdR_+\arr\cdR_+$   of~\eqref{eq:ODE_phi} with $\phi(0) = 0$
	has the following properties.
  	\begin{multicols}{2}
		\begin{itemize}
			\item [{\rm (i)}]
			$\lims
			s^{-\frac{1}{\aa}} \phi =	\limphI $.
			\item [{ \rm (ii)}]	$\lims
			s^{\frac{\aa-1}{\aa}} \dph  =
			\limphII$.
		\end{itemize}
		\begin{itemize}
			\item [{ \rm (iii)}]	
			$\lims
			s^{\frac{2\aa-1}{\aa}} \ddph
			= \limphIII$.
			\item [{\rm (iv)}] $\lims s^{\frac{3\aa-1}{\aa}}\dddph = \limphIV$.
		\end{itemize}	
	\end{multicols}
\end{prop}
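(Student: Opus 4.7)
The plan is to exploit the fact that, since $\phi$ is strictly increasing from $0$ to $\infty$ (by \eqref{eq:phi_easybnds}), there exists $s_R := \phi^{-1}(R) \ge 0$ such that for all $s \ge s_R$ we have $f(\phi(s)) = k\phi(s)^\aa$ and $\df(\phi(s)) = \aa k\phi(s)^{\aa-1}$. Hence on $[s_R,\infty)$ the ODE \eqref{eq:ODE_phi} reduces to the closed form
\[
    \dph(s) = \big(1 + \aa^2 k^2 \phi(s)^{2\aa-2}\big)^{-1/2},
\]
and all four claims become bootstrap statements that can be extracted by differentiating this identity the appropriate number of times and substituting the previously established asymptotics of $\phi$.

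For (i), since $\phi(s)\to\infty$ as $s\to\infty$ (this follows from $\dph > 0$ together with a non-integrability argument if $\phi$ were bounded), the above formula gives $\aa k \phi^{\aa-1}\dph \to 1$. Observing that $\aa\phi^{\aa-1}\dph = \frac{\mathrm d}{\mathrm ds}\phi^\aa$, an application of L'H\^opital's rule (or direct integration from $s_R$ to $s$) yields $\phi(s)^\aa/s \to 1/k$, which is exactly (i) after taking $\aa$-th roots. For (ii), I plug the asymptotics of $\phi$ from (i) into the identity $\dph \sim (\aa k \phi^{\aa-1})^{-1}$, verifying that the correction from replacing $(1+\aa^2k^2\phi^{2\aa-2})^{-1/2}$ by $(\aa k \phi^{\aa-1})^{-1}$ is of relative order $\phi^{2-2\aa} = o(1)$ since $\aa > 1$.

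For (iii) and (iv), differentiating the ODE once gives
\[
    \ddph = -\frac{\aa^2(\aa-1) k^2 \phi^{2\aa-3}}{\big(1 + \aa^2 k^2 \phi^{2\aa-2}\big)^2},
\]
and a further differentiation (using the chain rule and $P' = 2\aa^2(\aa-1)k^2\phi^{2\aa-3}\dph$ where $P := 1+\aa^2k^2\phi^{2\aa-2}$) expresses $\dddph$ as a rational function of $\phi$ multiplied by $\dph$. In both cases, the denominators are dominated by the leading term $\aa^{2j}k^{2j}\phi^{j(2\aa-2)}$, so after cancellation the expressions reduce to a pure power of $\phi$ times an explicit constant, and substituting $\phi(s)\sim k^{-1/\aa}s^{1/\aa}$ from (i) gives the claimed limits in (iii) and (iv).

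The main obstacle is not conceptual but bookkeeping: one must track the exponents of $\phi$ carefully through the differentiations and check at each stage that the subleading contributions in $(1+\aa^2 k^2 \phi^{2\aa-2})^{-j}$ are swept into $o$-terms of the right order. Since $\aa > 1$ ensures that every correction carries a strictly negative power of $\phi$, this is routine. No issue arises on $[0, s_R]$: the derivatives of $\phi$ are smooth and bounded there, and all four statements concern the behaviour as $s\to\infty$, so the transition at $s_R$ is immaterial.
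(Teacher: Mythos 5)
Your proposal is correct and follows essentially the same route as the paper: reduce to the power-law form of~\eqref{eq:ODE_phi} for large $s$, obtain (i) by integrating $\frac{\dd}{\dd s}\phi^\aa\to 1/k$ (the paper does this via two-sided integrated inequalities rather than L'H\^opital, a cosmetic difference), and then bootstrap (ii)--(iv) by substituting (i) into the ODE and its successive derivatives, with the subleading powers of $\phi$ absorbed thanks to $\aa>1$.
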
	
\begin{proof}
	Notice that we obviously have
	$\lims\phi(s) = \infty$,
	because the map $\cdR_+\ni s\mapsto (\phi(s), f(\phi(s))$ is a re-parametrization of the curve $\cdR_+\ni x\mapsto (x,f(x))$.
	The differential equation~\eqref{eq:ODE_phi} implies
	that on the interval $[R,\infty)$
	the following estimates hold:
	\[
		\aa k\phi^{\aa-1}\dph \le 1
		\and
		\big(1 + \aa k\phi^{\aa-1}\big)\dph \ge 1.
	\]
	Integrating the above inequalities on the interval
	$[R,s]$ we get
	\begin{equation}\label{eq:phi_ineqs1}
		k\phi^\aa(s) \le s + \cO(1)\and
		\phi(s) + k\phi^\aa(s) \ge s + \cO(1),
		\qquad \forall\, s \ge R.
	\end{equation}
	Plugging the first inequality
	in~\eqref{eq:phi_ineqs1} into the second, 	
	we obtain
	\begin{equation}\label{eq:phi_ineqs2}
		k\phi^\aa(s) \ge s - \left(\frac{s}{k}\right)^{\frac{1}{\aa}}
		+ \cO(1),
		\qquad
		\forall\, s \ge R.
	\end{equation}
	Combining the first inequality in~\eqref{eq:phi_ineqs1} with~\eqref{eq:phi_ineqs2}
	and using the assumption  $\aa > 1$ we get the limit in~{\rm (i)}.
	Furthermore, the limit in~{\rm (ii)} can be
	shown as follows,
	\[
	\begin{split}
		\lims s^{\frac{\aa-1}{\aa}}\dot\phi
		& =
		\lims s^{\frac{\aa-1}{\aa}}\big(1+\aa^2 k^2 \phi^{2\aa-2}\big)^{-\frac12}
		=
		\lims \left(s^{-\frac{2(\aa-1)}{\aa}} +
				\aa^2 k^2 s^{-\frac{2(\aa-1)}{\aa}}
				\phi^{2\aa-2}\right)^{-\frac12} \\
		& =
		\lims \frac{1}{\aa k s^{-\frac{\aa-1}{\aa}}\phi^{\aa-1}} = \frac{ k^{-\frac{1}{\aa}}}{\aa}.
	\end{split}
	\]
	The  differential equation~\eqref{eq:ODE_phi}
	can be alternatively written as
	\begin{equation}\label{eq:dot_phi}
		\dph(s) = \frac{1}{\big(1+\df^2(\phi(s))\big)^{\frac12}}.
	\end{equation}
	Differentiating the left and right hand sides
	of equation~\eqref{eq:dot_phi}, we express $\ddph$ as follows,
	\begin{equation}\label{eq:ddot_phi}
		\ddph = -\frac{\df(\phi) \ddf(\phi) \dph}{(1+\df^2(\phi))^{\frac32}}
				= - \frac{\df(\phi) \ddf(\phi)}{(1+\df^2(\phi))^2}.	
	\end{equation}
	Hence, on the interval
	$[R,\infty)$, we have
	\begin{equation*}
		\ddph = - \frac{\aa^2k^2(\aa-1)\phi^{2\aa -3}}{ \big(1+\aa^2 k^2 \phi^{2\aa-2}\big)^2 }.
	\end{equation*}
	Eventually, using {\rm (i)} we get
	\[
		\lims s^{\frac{2\aa-1}{\aa}}\ddph
			= 	- \lims\frac{\aa-1}{\aa^2 k^2 s^{-\frac{2\aa-1}{\aa}}  \phi^{2\aa-1}}
			=  \frac{\aa-1}{\aa^2 k^2 k^{-\frac{2\aa-1}{\aa}}}	
			=  -\frac{\aa-1}{\aa^2}k^{-\frac{1}{\aa}},	
	\]
	and in this way the limit in~{\rm (iii)} is also obtained.
	
	Differentiating the left and the right hand sides
	of~\eqref{eq:ddot_phi}, we express $\dddot\phi$ as follows,
	\[
	\begin{split}
		\dddph& =
			- \frac{(\ddf^2(\phi) + \df(\phi)\dddf(\phi))
			(1+\df^2(\phi))\dph
			- 4\df^2(\phi)\ddf^2(\phi)\dph}{(1+\df^2(\phi))^3}	\\
		& =
			\frac{3\df^2(\phi)\ddf^2(\phi)
				-\ddf^2(\phi) - \df(\phi)\dddf(\phi)
			- \df^3(\phi)\dddf(\phi)}
						{(1+\df^2(\phi))^{\frac72}}.
	\end{split}
	\]
	The latter yields that on the interval
	$[R,\infty)$
	\[
	\begin{split}
		\dddph
		& =
		\frac{
		3\aa^4(\aa-1)^2 k^4\phi^{4\aa-6}
		-
		\aa^2(\aa-1)(2\aa-3)k^2\phi^{2\aa-4}
		- \aa^4(\aa-1)(\aa-2)k^4\phi^{4\aa-6}
		}{\big(1+ \aa^2 k^2\phi^{2\aa-2}\big)^{\frac72}}\\
		& =
		\frac{
		\aa^4(\aa-1)	(2\aa-1)k^4\phi^{4\aa-6}
		-
		\aa^2(\aa-1)(2\aa-3)k^2\phi^{2\aa-4}
			}{\big(1+ \aa^2 k^2\phi^{2\aa-2}\big)^{\frac72}}.
	\end{split}
	\]
	Again using~{\rm (i)} we obtain
	\[
	\begin{split}
		\lims
		s^{\frac{3\aa-1}{\aa}}\dddot\phi
		& =
		\lims
		s^{\frac{3\aa-1}{\aa}}
			\frac{
				\aa^4(\aa-1)(2\aa-1)k^4\phi^{4\aa-6}
					-
				\aa^2(\aa-1)(2\aa-3)k^2\phi^{2\aa-4}
			}{\big(1+ \aa^2 k^2\phi^{2\aa-2}\big)^{\frac72}}\\
		& =
		\lims
		s^{\frac{3\aa-1}{\aa}}
		\frac{\aa^4(\aa-1)(2\aa-1)k^4
			\phi^{4\aa -6}}{\aa^7k^7\phi^{7\aa-7}}
		=
		\lims
		\frac{(\aa-1)(2\aa-1)}{\aa^3 k^3	s^{-\frac{3\aa-1}{\aa}}\phi^{3\aa-1}}\\
		& =
		\frac{(\aa-1)(2\aa-1)}{\aa^3 k^3 k^{-\frac{3\aa-1}{\aa}}}
		=
		\frac{(\aa-1)(2\aa-1)}{\aa^3} k^{-\frac{1}{\aa}},
	\end{split}
	\]
	by which the limit in {\rm (iv)} is also shown.
\end{proof}
Recall that the signed curvature of the curve $\G$ is given by the formula
\begin{equation}\label{eq:signed_curv_app}
	\gg = \ddf(\phi)\dph^3.
\end{equation}
Finally, we prove a claim about the asymptotic behaviour
of $\gg$ and its derivatives
up to the second order, in the limit $s\arr\infty$.
\begin{prop}\label{prop:gamma}
	Let the signed curvature $\gg\colon\cdR_+\arr\dR$
	be as
	in~\eqref{eq:signed_curv_app}.
	Then there exist $g_j\in\dR$, $j=0,1,2$,
	such that:
	\begin{myenum}
		\item	$\lims s^{\frac{2\aa-1}{\aa}} \gg
		 = g_0$,
		\item $\lims s^{\frac{3\aa-1}{\aa}} \dot\gg = 	g_1$,
		\item $\lims s^{\frac{4\aa-1}{\aa}} \ddot\gg
		=	g_2$.
	\end{myenum}	
\end{prop}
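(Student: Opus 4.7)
The proof proceeds by direct differentiation of the explicit expression for $\gg$ on the tail and substitution of the asymptotic formul\ae{} provided by Proposition~\ref{prop:phi_bnd}. On $[R,\infty)$, where $f(x)=kx^\aa$, the formula~\eqref{eq:signed_curv_app} specializes to
\[
   \gg(s) = \aa(\aa-1)k\,\phi^{\aa-2}(s)\,\dph^{\,3}(s),
\]
so each derivative of $\gg$ is a finite sum of monomials of the form $c\cdot \phi^{a}\dph^{\,b}\ddph^{\,c}\dddph^{\,d}$ with integer exponents satisfying a bookkeeping rule: each differentiation either lowers $a$ by $1$ and raises $b$ by $1$, or lowers one of $b,c$ by $1$ and raises the index of the next derivative by $1$. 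The key observation, driven by Proposition~\ref{prop:phi_bnd}, is that each factor of $\phi$ contributes $s^{1/\aa}$, while each of $\dph,\ddph,\dddph$ contributes, respectively, $s^{-(\aa-1)/\aa},s^{-(2\aa-1)/\aa},s^{-(3\aa-1)/\aa}$; an elementary arithmetic check shows that both of the two \emph{local} differentiation rules lower the exponent of $s$ by exactly $1$, so every term produced by $n$ successive differentiations decays precisely at the rate $s^{-(2\aa-1)/\aa-n}$.

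For claim~(i), I substitute the leading-order behaviour $\phi\sim k^{-1/\aa}s^{1/\aa}$ from Proposition~\ref{prop:phi_bnd}\,(i) and $\dph\sim\frac{k^{-1/\aa}}{\aa}s^{-(\aa-1)/\aa}$ from (ii) into $\gg$ to obtain
\[
  \lims s^{(2\aa-1)/\aa}\,\gg(s) = \frac{\aa-1}{\aa^{2}}\,k^{-1/\aa} =: g_0.
\]
For claim~(ii), I differentiate to obtain
\[
  \dot\gg = \aa(\aa-1)(\aa-2)k\,\phi^{\aa-3}\dph^{\,4} + 3\aa(\aa-1)k\,\phi^{\aa-2}\dph^{\,2}\ddph;
\]
both terms have order $s^{-(3\aa-1)/\aa}$ by the bookkeeping above, and substituting the leading orders of $\phi,\dph,\ddph$ yields a finite constant $g_1$. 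For claim~(iii), a further differentiation produces three explicit terms, each involving $\phi^{\aa-4}\dph^{\,5}$, $\phi^{\aa-3}\dph^{\,3}\ddph$, or $\phi^{\aa-2}\dph\,\ddph^{\,2}$ and $\phi^{\aa-2}\dph^{\,2}\dddph$; each carries the order $s^{-(4\aa-1)/\aa}$, and the leading asymptotics from Proposition~\ref{prop:phi_bnd} yield a finite limit $g_2$.

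The main obstacle is purely organizational: ensuring that every term that arises under repeated differentiation matches the claimed decay rate so that no term produces a slower decay than the leading one. This is handled by the abstract bookkeeping lemma sketched above, which can be verified by induction on the order of differentiation or just checked directly for $n=1,2$ as needed here. The explicit values of $g_0,g_1,g_2$ are not required elsewhere in the paper (only the fact that the limits exist, which via $\xi_p,\zeta_p\to 0,1$ enters the bracketing arguments of Sections~\ref{sec:nonrad_fib}--\ref{sec:stra}), so the proof can stop at the verification that each limit is finite, although the explicit constants can be read off the computation if desired.
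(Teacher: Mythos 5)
Your proposal is correct and follows essentially the same route as the paper: specialize $\gg=\aa(\aa-1)k\,\phi^{\aa-2}\dph^3$ on the tail, differentiate explicitly (your monomials match the paper's formul\ae{} for $\dot\gg$ and $\ddot\gg$), and read off the existence of the limits from the asymptotics of $\phi,\dph,\ddph,\dddph$ in Proposition~\ref{prop:phi_bnd}. The only difference is presentational: you package the exponent count as a small bookkeeping rule, while the paper simply lists the terms and concludes directly.
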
	
\begin{proof}
	The first and the second derivatives
	of the signed curvature $\gg$ are given by
	\[
	\begin{split}
		\dot\gg & = 3\dot\phi^2\ddot\phi\ddf(\phi)
		+\dot\phi^4 \dddf(\phi),\\
		\ddot\gg & =
		6\dph\ddph^2\ddf(\phi)
		+3\dph^2\dddph\ddf(\phi)
		+7\dph^3\ddph\dddf(\phi)
		+\dph^5 \ddddf(\phi).\\
	\end{split}	
	\]
	Hence, using the notation
	$\kp := \aa(\aa-1)k$
	we infer that on the interval
	$[R,\infty)$ the following relations hold:
	\begin{equation*}\label{key}
	\begin{split}
		\gg
		& = \kp\phi^{\aa-2}\dph^3,\\
		\dot\gg & =
		\kp\left [
		3\phi^{\aa-2}\dph^2\ddph
		+(\aa-2)\phi^{\aa-3}\dph^4\right],\\
		\ddot\gg& =
		\kp\Big[
		\phi^{\aa-2}
		\big(6\dph\ddph^2	+3\dph^2\dddph\big)
		 +
		7(\aa-2)\phi^{\aa-3}
		\dph^3\ddph
		 +(\aa-2)(\aa-3)\phi^{\aa-4}
		\dph^5\Big].	
	\end{split}
	\end{equation*}
	Eventually, existence of finite limits
	in (i)-(iii) directly follows
	from Proposition~\ref{prop:phi_bnd}\,(i)-(iv).
\end{proof}
\end{appendix}

\section*{Acknowledgment}
The authors acknowledge the support by the grant
No.~17-01706S of the Czech Science Foundation (GA\v{C}R).

\newcommand{\etalchar}[1]{$^{#1}$}

\end{document}